\newtheoremstyle{theoremstyle}
  {10pt}      
  {5pt}       
  {\itshape}  
  {}          
  {\bfseries} 
  {:}         
  {.5em}      
  {}          
\newtheoremstyle{examplestyle}
  {10pt}      
  {5pt}       
  {}          
  {}          
  {\bfseries} 
  {:}         
  {.5em}      
  {}          
\theoremstyle{theoremstyle}
\newtheorem{theorem}{Theorem}[section]
\newtheorem*{theorem*}{Theorem}
\newtheorem{lemma}[theorem]{Lemma}
\newtheorem*{proposition*}{Proposition}
\newtheorem*{corollary*}{Corollary}
\newtheorem{definition*}{Definition}
\newtheorem{remark}[theorem]{Remark}
\newtheorem{remark*}{Remark}
\newcommand{\bF}{{\mathbf F}}
\newcommand{\bG}{{\mathbb G}}
\newcommand{\bL}{{\mathbb L}}
\newcommand{\caC}{{\mathcal C}}
\newcommand{\caK}{{\mathcal K}}
\newcommand{\caKK}{{\mathscr K}}
\renewcommand{\P}{{\mathbb P}}
\newcommand{\Ab}{\mathsf{Ab}}
\newcommand{\op}{\mathrm{op}}
\newcommand{\Ho}{\mathsf{Ho}}
\newcommand{\BP}{\mathsf{BP}}
\newcommand{\Hom}{\mathsf{Hom}}
\newcommand{\Map}{\mathsf{Map}}
\newcommand{\EE}{\mathsf{E}}
\newcommand{\Ch}{\mathsf{Ch}}
\newcommand{\ff}{\mathsf{f}}
\newcommand{\MGL}{\mathsf{MGL}}
\newcommand{\MBP}{\mathsf{MBP}}
\newcommand{\RRR}{\mathbf{R}}
\newcommand{\LLL}{\mathbf{L}}
\newcommand{\HGamma}{\mathsf{H}\Gamma}
\newcommand{\HH}{\mathsf{H}}
\renewcommand{\HH}{\mathbf{H}}
\newcommand{\SH}{\mathbf{SH}}
\newcommand{\BGL}{\mathsf{B}\mathbf{GL}}
\newcommand{\KGL}{\mathsf{KGL}}
\newcommand{\KO}{\mathsf{KO}}
\newcommand{\kgl}{\mathsf{kgl}}
\newcommand{\LLLL}{\mathsf{L}}
\newcommand{\ML}{\mathsf{ML}}
\newcommand{\ml}{\mathsf{ml}}
\newcommand{\BU}{\mathsf{BU}}
\newcommand{\KU}{\mathsf{KU}}
\newcommand{\MU}{\mathsf{MU}}
\newcommand{\Z}{\mathbf{Z}}
\newcommand{\Q}{\mathbf{Q}}
\newcommand{\colim}{\mathrm{colim}}
\newcommand{\Mod}{{\mathsf {mod}}}
\newcommand{\hocolim}{\mathrm{hocolim}}
\title{{\bf Existence and uniqueness of $E_{\infty}$ structures on motivic $K$-theory spectra}}
\author{Niko Naumann, Markus Spitzweck, Paul Arne {\O}stv{\ae}r}
\date{\today}
\begin{document}
\maketitle
\begin{abstract}
We show that algebraic $K$-theory $\KGL$,
the motivic Adams summand $\ML$ and their connective covers acquire unique $E_{\infty}$ structures 
refining naive multiplicative structures in the motivic stable homotopy category.
The proofs combine $\Gamma$-homology computations and work due to Robinson giving rise to motivic 
obstruction theory.  
As an application we employ a motivic to simplicial delooping argument to show a uniqueness result for 
$E_\infty$ structures on the $K$-theory Nisnevich presheaf of spectra.
\end{abstract}

\section{Introduction}
Motivic homotopy theory intertwines classical algebraic geometry and modern algebraic topology.
In this paper we study obstruction theory for $E_\infty$ structures in the motivic setup.
An $E_\infty$ structure on a motivic spectrum refers to a coherent homotopy commutative multiplication.
Many examples of motivic ring spectra begin life as a commutative monoid in the motivic stable homotopy category.
This is always the case in applications of motivic Landweber exactness \cite{motiviclandweber}. 
In this paper we are interested in the following questions: 
Can the multiplicative structure of a given commutative monoid in the motivic stable homotopy category be refined to an $E_\infty$ ring spectrum ? 
And if such a refinement exists, is it unique ? 
More ambitiously, 
one may try to determine the homotopy type of a suitable classifying space of $E_\infty$ structures. 
The questions of existence and uniqueness of $E_\infty$ structures and their many ramifications have been studied extensively in topology for the
last forty years \cite{may}.
The first motivic examples of unique $E_\infty$ structures worked out in this paper are of $K$-theoretic interest.
We also prove a less streamlined uniqueness result for $E_\infty$ structures on the $K$-theory Nisnevich presheaf of spectra.
\vspace{0.1in} 

The complex cobordism spectrum $\MU$ and its motivic analogue $\MGL$ have natural $E_\infty$ structures.
In the topological setup, 
Baker and Richter \cite{bakerrichter} have shown that the complex $K$-theory spectrum $\KU$,
the Adams summand $\LLLL$ and the real $K$-theory spectrum $\KO$ admit unique $E_\infty$ structures.
The results in \cite{bakerrichter} are approached via the obstruction theory developed by Robinson in \cite{Robinson}, 
where it is shown that existence and uniqueness of $E_\infty$ structures are guaranteed provided certain $\Gamma$-cohomology groups vanish. 
\vspace{0.1in} 

In our approach we introduce and apply motivic obstruction theory.
Theorem \ref{gammacomputation} shows the relevant motivic $\Gamma$-cohomology groups vanish for algebraic $K$-theory $\KGL$.
The same result holds with the motivic Adams summand $\ML$ introduced in \S \ref{section:ThemotivicAdamssummandsMLandml}. 
The main ingredients in the proofs are our new computations of the $\Gamma$-homology complexes of the topological spectra $\KU$ and $\LLLL$, 
see Theorem \ref{k-theory-coops-gamma-cotangentcomplex} and Lemma \ref{lemma:L_0L},
and the Landweber base change formula for the motivic cooperations of $\KGL$ and $\ML$.
Throughout we work over a fixed separated noetherian base scheme of finite Krull dimension.
Our main result for $\KGL$ can be formulated in the following way.

\begin{theorem}\label{uniqueKGL}
Algebraic $K$-theory $\KGL$ acquires a unique $E_{\infty}$ structure refining its multiplication in the motivic stable homotopy category.
\end{theorem}

While the existence of an $E_\infty$ structure on $\KGL$ is known thanks to the Bott inverted model for algebraic $K$-theory, 
see \cite{RSO}, \cite{SO:Bottinverted}, \cite{gepnersnaith}, 
the analogous existence result for $\ML$ is new. 
The uniqueness part of the theorem rules out the existence of exotic $E_\infty$ structures on $\KGL$.
We note that related motivic $E_\infty$ structures are used in the recent construction of the Atiyah-Hirzebruch spectral sequence for 
motivic twisted $K$-theory \cite{SO:twistedKtheory}.
\vspace{0.1in} 

In Section \ref{juytd} we show that the connective cover $\kgl$ of algebraic $K$-theory has a unique $E_{\infty}$ structure.
The exact same result is shown for the connective cover of the Adams summand in Section \ref{section:ThemotivicAdamssummandsMLandml}.
See \cite{bakerrichterconnective} for the analogous topological results. 
\vspace{0.1in} 

We conclude the introduction with a short overview of the paper.
Section \ref{motivicobs} gives the motivic version of a main result of Robinson's obstruction theory,
see Theorem \ref{motivicgamma}, 
which is applied in our proof of Theorem \ref{uniqueKGL}.
In Section \ref{computeKGL} we show the basic input in motivic obstruction theory is computable for $\KGL$;
our main result is Theorem \ref{gammacomputation}.
Sections \ref{connective} and \ref{adams} treat the examples of connective algebraic $K$-theory and the motivic Adams summand,
respectively.
In Section \ref{multstructures} we use Theorem \ref{motivicgamma} to study multiplicative structures on the $K$-theory Nisnevich presheaf of spectra.
The argument involves an intricate delooping argument allowing to transfer the motivic $E_{\infty}$ structure result to the classical setup. 
We note that existence of an $A_\infty$ structure on a motivic symmetric ring spectrum representing algebraic $K$-theory over a regular base scheme 
was shown in \cite{youngsoo-kim}.

\section{Motivic obstruction theory}
\label{motivicobs}
In this section we prove a basic result in motivic obstruction theory, 
Theorem \ref{motivicgamma}, 
which we use to prove our main results.
Fix a separated Noetherian base scheme $S$ of finite Krull dimension with motivic stable homotopy category $\SH(S)$.
Motivic functors \cite{DRO}, $S$-modules \cite{hu}, 
and motivic symmetric spectra \cite{jardinemotivicspectra} furnish monoidal model structures whose associated homotopy categories 
are equivalent to $\SH(S)$.

Let $\EE$ be a homotopy commutative motivic ring spectrum, 
i.e., a commutative and associative unitary monoid in $\SH(S)$. 
Set $R\equiv\EE_{**}$ and $\Lambda\equiv\EE_{**}\EE$, 
the coefficients and cooperations of $\EE$,
respectively. 
Here we write as usual $\EE_{p,q}:=\SH(S)(S^{p,q},\EE)$ for the motivic spheres $S^{p,q}:=(S^1)^{p-q}\wedge (\bG_m)^{\wedge q}$ ($p,q\in\mathbb{Z}$).

Then $\EE$ satisfies UCT, 
the universal coefficient theorem, 
if for all $n\ge 1$ the Kronecker product yields an isomorphism
\[ 
\EE^{**}(\EE^{\wedge\,n})
\stackrel{\cong}{\longrightarrow} 
\Hom_R(\Lambda^{\otimes_{R} n},R).
\]
It is well known that algebraic $K$-theory $\KGL$ satisfies UCT \cite[Theorem 9.3 (i)]{motiviclandweber}. 

For $\EE$ as above, 
one associates to $R$ and $\Lambda$ the $\Gamma$-homology complex and the trigraded motivic $\Gamma$-cohomology 
groups $\HGamma^{***}(\Lambda|R;R)$ of $\Lambda$ over $R$, 
cf.~Section \ref{computeKGL} for details. 
When read from left to right, 
the trigrading is given by the Gamma-cohomology degree, the simplicial degree and the weight.
Our motivic version of Robinson's \cite[Theorem 5.6]{Robinson} takes the following form.

\begin{theorem}
\label{motivicgamma}
Suppose $\EE$ is a homotopy commutative motivic ring spectrum satisfying UCT, 
$\HGamma^{n,2-n,0}(\Lambda|R;R)=0$ for $n \ge 4$, 
and $\HGamma^{n,1-n,0}(\Lambda|R;R)=0$ for $n\ge 3$.
Then $\EE$ acquires an $E_\infty$ structure which is unique up to homotopy.
\end{theorem}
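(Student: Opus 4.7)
The plan is to transcribe Robinson's proof of \cite[Theorem 5.6]{Robinson} to the motivic setting, using Jardine's symmetric monoidal model structure on motivic symmetric spectra as the underlying framework. Fix a cofibrant $E_\infty$-operad $\mathcal{O}$ equipped with a filtration $\mathcal{O}^{(0)} \to \mathcal{O}^{(1)} \to \cdots$ whose colimit is $\mathcal{O}$ and whose layers are modelled by Robinson's tree bicomplex. An $E_\infty$-structure on $\EE$ refining the given commutative monoid structure in $SH$ is then a compatible family of $\mathcal{O}^{(n)}$-algebra lifts through this tower. The bottom stage corresponds to the $h_\infty$-structure already present, and the strategy is to extend one stage at a time, identifying the primary obstruction at each stage with a cocycle on the tree complex for the pair $(R,\Lambda)$.

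The core calculation is that at stage $n$ the obstruction to extending an $\mathcal{O}^{(n-1)}$-algebra structure to an $\mathcal{O}^{(n)}$-algebra structure is represented by a class in motivic $\Gamma$-cohomology. The UCT hypothesis is what makes this possible: it identifies $\EE^{**}(\EE^{\wedge n})$ with $\Hom_R(\Lambda^{\otimes_R n},R)$, so the operadic obstruction, which a priori lives in a homotopical group of maps from a tree cell to $\EE$, is converted into a purely algebraic cocycle on the Robinson tree complex of $\Lambda$ over $R$ with coefficients in $R$. Tracking degrees through the tower one finds that the existence obstruction lies in $\HGamma^{n,2-n,*}(\Lambda|R;R)$ for $n\ge 4$, with the shift $(n,2-n)$ arising exactly as in the topological case and the third grading simply carrying the motivic weight along. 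Vanishing of these groups lets us extend to $\mathcal{O}^{(n)}$ at every stage; the colimit then yields an $E_\infty$-structure refining the given multiplication.

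For uniqueness I would run the same tower one dimension down. The space of homotopies between two $E_\infty$-structures lifting the same commutative monoid structure is analyzed stage by stage, and the obstruction to extending a homotopy from $\mathcal{O}^{(n-1)}$ to $\mathcal{O}^{(n)}$ is a derivation-type class in $\HGamma^{n,1-n,*}(\Lambda|R;R)$ for $n\ge 3$, the degree shift by one reflecting the passage from primary obstructions to primary differences. Vanishing of these groups inductively produces a homotopy between any two refinements, giving uniqueness up to homotopy.

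The hard part, and the reason the authors defer to a proof not yet in print, is verifying that motivic symmetric spectra support the tree-filtered operadic obstruction calculus in the same formal way as topological symmetric spectra. Concretely, one needs a positive/flat model structure on $\Sigma_n$-equivariant motivic symmetric spectra so that smash powers $\EE^{\wedge n}$ compute the correct homotopical symmetric powers, one needs the operadic bar/cobar construction to identify with Robinson's tree bicomplex after applying $\EE^{**}(-)$, and one needs the resulting cocycles to live in the trigraded $\HGamma$ with the precise shifts $(n,2-n,*)$ and $(n,1-n,*)$. Once this infrastructure is in place the remainder of Robinson's argument goes through essentially verbatim, since the motivic bigrading merely supplies an extra weight variable that is transported through the calculation without interacting with the operadic combinatorics.
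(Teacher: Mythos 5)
The paper does not actually prove Theorem~\ref{motivicgamma}: the surrounding text states explicitly that ``a proof of Theorem~\ref{motivicgamma} has not yet appeared in print, cf.~\cite{robinsonrevisited},'' and the authors content themselves with asserting that the statement is ``the almost identical motivic version of Robinson's result.'' So there is no paper proof against which to check your argument. That said, your proposal is a faithful and correctly organized sketch of how Robinson's \cite[Theorem 5.6]{Robinson} would transcribe: building $E_\infty$-structures up a filtration of a cofibrant $E_\infty$-operad, using the universal coefficient hypothesis to convert homotopical obstruction classes on smash powers $\EE^{\wedge n}$ into cocycles on the tree bicomplex of $\Lambda$ over $R$, locating the existence obstructions in bidegree $(n,2-n)$ for $n\ge 4$ and the uniqueness obstructions in $(n,1-n)$ for $n\ge 3$, and letting the motivic weight ride along as a passive third grading. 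You also correctly identify where the real work lies, namely in the model-categorical infrastructure (positive/flat model structures for motivic symmetric spectra, compatibility of the operadic bar construction with Robinson's $\Xi$-complex after applying $\EE^{**}$), which is precisely the content that the paper defers to the unpublished reference. In short, your proposal matches the paper's intended route and is, if anything, more explicit about the structure of the argument than the paper itself; neither you nor the paper supplies the foundational verification, but you flag that gap honestly rather than papering over it.
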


The proof of Theorem \ref{motivicgamma} is a straightforward adaption of Robinson's work in \cite{Robinson} and 
\cite{robinsonrevisited} to the motivic setup. 
Following \cite{Robinson} and \cite{robinsonrevisited} we fix a specific cofibrant $E_\infty$ operad $E\Sigma\times{\mathcal T}$ in simplicial sets.
The notion of stages yields the derived mapping space ${\mathcal E}_\infty({\mathcal C},\mu)$ from $E\Sigma\times {\mathcal T}$ to an (almost)
arbitrary simplicial operad $\caC$ as the homotopy limit of an explicit tower, 
cf.~\cite[Section 4.4]{robinsonrevisited}. 
The tower sets up a Bousfield-Kan spectral sequence, taking the form
\[ 
E_1^{pq}
=
\pi_{q-p}St^p_{p-1}
\Longrightarrow 
\pi_{q-p}{\mathcal E}_\infty({\mathcal C},\mu).
\]
The filtration quotients $St^p_{p-1}$ are determined by the stages.
An analysis of the geometry of $E\Sigma\times {\cal T}$ shows the $E_2$-page coincides with the stable cohomotopy of the $\Gamma$-module 
afforded by the homotopy of $\cal C$.
Our main example of interest is the endomorphism operad $\caC$ of $\EE$, 
formed in any of the simplicial model categories underlying $\SH(S)$,
with spaces $\caC_n=\Map(\EE^{\wedge n},\EE)$. 
By the UCT we obtain
\[ 
\pi_*\caC_\bullet
=
\EE^{*,0}(\EE^{\wedge \bullet})\subseteq \EE^{**}(\EE^{\wedge \bullet})
\simeq
\Hom_{\EE_{**}}(\EE_{**}\EE^{\otimes_{\EE_{**}}\bullet},\EE_{**}).
\]
Following verbatim the arguments leading up to \cite[Corollary 3.7 and Theorem 4.2]{Robinson}, 
while carrying along the additional grading given by the motivic weight, 
shows this is an identification of $\Gamma$-modules. 
This observation concludes the proof of Theorem \ref{motivicgamma}.

We remark that, 
while Theorem \ref{motivicgamma} only concerns the weight zero part of motivic $\Gamma$-cohomology,
this cannot a priori be extracted exclusively from the weight zero parts of the algebras $\EE_{**}$ and $\EE_{**}\EE$.

Our proof of Theorem \ref{uniqueKGL} is complete by combining Theorem \ref{motivicgamma} for $\EE=\KGL$ and Theorem \ref{gammacomputation}.

\section{Algebraic $K$-theory $\KGL$}\label{computeKGL}
In this section we shall present the $\Gamma$-cohomology computation showing there is a unique $E_{\infty}$ structure on the algebraic $K$-theory spectrum $\KGL$.
Throughout we work over some separated Noetherian base scheme of finite Krull dimension, 
which will be omitted from the notation.

There are two main ingredients which make this computation possible: 
First, the $\Gamma$-homology computation of $\KU_0\KU$ over $\KU_0=\Z$, 
where $\KU$ is the complex $K$-theory spectrum. 
Second, 
we employ base change for the motivic cooperations of algebraic $K$-theory, 
as shown in our previous work \cite{motiviclandweber}.

\subsection{The $\Gamma$-homology of $\KU_0\KU$ over $\KU_0$}
\label{subsection:TheGamma-homologyofKU_0KUoverKU_0}
For a map $A\longrightarrow B$ between commutative algebras we denote Robinson's $\Gamma$-homology complex by 
$\widetilde{\caK}(B\vert A)$ \cite[Definition 4.1]{Robinson}.
Recall that $\widetilde{\caK}(B\vert A)$ is a homological double complex of $B$-modules concentrated in the first quadrant.
The same construction can be performed for maps between graded and bigraded algebras. 
In all cases we let ${\caK}(B\vert A)$ denote the total complex associated with the double complex $\widetilde{\caK}(B\vert A)$.
\vspace{0.1in} 

The $\Gamma$-cohomology
$$
\HGamma^*(\KU_0\KU|\KU_0, -)
=
\HH^*\RRR\Hom_{\KU_0\KU}({\caK}(\KU_0\KU|\KU_0),-) 
$$ 
has been computed for various coefficients in \cite{bakerrichter}. 
In what follows we require precise information about the complex ${\caK}(\KU_0\KU|\KU_0)$ itself, 
since it is the object which satisfies a motivic base change property, 
cf.~Lemma \ref{coops}.
\begin{lemma}
\label{rational}
Let $X\in \Ch_{\ge 0}(\Ab)$ be a non-negative chain complex of abelian groups. 
The following statements are equivalent.
\begin{enumerate}
\item[i)] The canonical map $X\longrightarrow X\otimes_\Z^{\LLL} \Q=X\otimes_\Z\Q$ is a quasi-isomorphism.
\item[ii)] For every prime $p$, there is a quasi-isomorphism $X\otimes ^{\LLL}_\Z\bF_p\simeq 0$.
\end{enumerate}
\end{lemma}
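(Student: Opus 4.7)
The plan is to reformulate both conditions as vanishing statements for derived tensor products and then play them off against each other using the arithmetic square, that is, the short exact sequence $0\to\Z\to\Q\to\Q/\Z\to 0$.

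First I would note that, since $\Q$ is flat over $\Z$, tensoring this sequence with $X$ yields a cofiber sequence $X\to X\otimes_\Z\Q\to X\otimes^{\LLL}_\Z \Q/\Z$, so condition (i) is equivalent to the vanishing of $X\otimes^{\LLL}_\Z\Q/\Z$. Next I would use the decomposition $\Q/\Z\cong\bigoplus_p \Z[1/p]/\Z$ into $p$-primary parts, together with the Pr\"ufer-group identification $\Z[1/p]/\Z\cong\colim_k \Z/p^k$, to reduce (i) further to the collection of vanishings $X\otimes^{\LLL}_\Z\Z/p^k\simeq 0$ for every prime $p$ and every $k\geq 1$; this relies on the fact that derived tensor product commutes with filtered colimits and direct sums.

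For the implication (ii) $\Rightarrow$ (i), I would argue by induction on $k$, using the short exact sequences $0\to\Z/p\to\Z/p^{k+1}\to\Z/p^k\to 0$: the induced cofiber sequences propagate the vanishing $X\otimes^{\LLL}_\Z \bF_p\simeq 0$ to $X\otimes^{\LLL}_\Z\Z/p^k\simeq 0$ for every $k$. For the converse (i) $\Rightarrow$ (ii), the key observation is that $\Q\otimes^{\LLL}_\Z\bF_p\simeq 0$---resolving $\bF_p$ by $\Z\xrightarrow{p}\Z$ and tensoring with $\Q$ produces the quasi-isomorphism $\Q\xrightarrow{p}\Q$---so once $X\simeq X\otimes_\Z\Q$ holds one obtains $X\otimes^{\LLL}_\Z\bF_p\simeq (X\otimes_\Z\Q)\otimes^{\LLL}_\Z\bF_p\simeq 0$.

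I do not anticipate any real obstacle; the argument is a standard manipulation of the arithmetic square. The only step that requires a moment of care is the commutation of $(-)\otimes^{\LLL}_\Z(-)$ with the filtered colimit $\colim_k \Z/p^k$, which however is automatic since derived tensor product preserves colimits in each variable, e.g.\ via K-flat resolutions. The non-negativity hypothesis on $X$ plays no role in this argument.
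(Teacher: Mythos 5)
Your proof is correct, but it takes a genuinely different route from the paper's. The paper exploits formality of chain complexes over $\Z$: it splits $X$ as $\bigoplus_n H_n(X)[n]$ up to quasi-isomorphism, computes $H_*(A[n]\otimes^{\LLL}_\Z \bF_p)\cong (A/pA)[n]\oplus A\{p\}[n+1]$ by resolving $\bF_p$, and concludes that (ii) is equivalent to multiplication by $p$ being an isomorphism on $H_*(X)$ for every $p$, which is a direct restatement of (i). Your argument instead avoids formality entirely: you run the arithmetic fracture square, reducing (i) to the acyclicity of $X\otimes^{\LLL}_\Z\Q/\Z$, decompose $\Q/\Z$ into Pr\"ufer $p$-groups, pass to $\Z/p^k$-coefficients via commutation of $\otimes^{\LLL}$ with filtered colimits, and then induct on $k$ through the cofiber sequences coming from $0\to\Z/p\to\Z/p^{k+1}\to\Z/p^k\to 0$; the converse direction is a one-liner since $\Q\otimes^{\LLL}_\Z\bF_p\simeq 0$. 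The paper's formality route buys an explicit description of the homology of the mod-$p$ reduction, which can be useful elsewhere; yours is less computational but, as you correctly note, dispenses with the non-negativity hypothesis (formality as invoked in the paper is a bounded-below statement), and it is the argument that generalizes most readily to other derived-categorical contexts where formality is unavailable. One small point of phrasing: when you say you ``reduce (i) to the collection of vanishings $X\otimes^{\LLL}_\Z\Z/p^k\simeq 0$,'' the direction you actually establish and use via colimits is that these vanishings imply (i); that is all the argument needs, but a colimit of acyclic complexes being acyclic does not by itself give the converse, so the word ``reduce'' slightly overstates what that step proves (the converse does hold, but by the associativity argument you give later, not by the colimit argument).
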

\begin{proof} 
It is well known that $X$ is formal \cite[pg.~164]{goerssjardine}, 
i.e., 
there is a quasi-isomorphism
\[ 
X\simeq \bigoplus_{n\ge 0}H_n(X)[n]. 
\]
Here, 
for an abelian group $A$ and integer $n$, 
we let $A[n]$ denote the chain complex that consists of $A$ concentrated in degree $n$.
Hence for every prime $p$, 
\[ 
X\otimes^{\LLL}_\Z \bF_p\simeq \bigoplus_{n\ge 0}\left( H_n(X)[n]\otimes^{\LLL}_\Z\bF_p\right).
\]
By resolving $\bF_p=(\Z\stackrel{\cdot p}{\longrightarrow}\Z)$ one finds an isomorphism
\[ 
H_*(A[n]\otimes_{\Z}^{\LLL}\bF_p)
\cong
(A/pA)[n]\oplus A\{p\}[n+1]
\]
for every abelian group $A$ and integer $n$.
Here $A\{p\}$ is shorthand for $\{ x\in A\, |\, px=0\}$. 
In summary, 
ii) holds if and only if the multiplication by $p$ map
\[ 
\cdot 
p
\colon
H_*(X)
\longrightarrow 
H_*(X)
\]
is an isomorphism for every prime $p$. The latter is equivalent to i).
\end{proof} 

We shall use the previous lemma in order to study cotangent complexes introduced by Illusie in \cite{illusie}. 
Let $R$ be a ring and set $R_\Q:=R\otimes_\Z\Q$.
There is a canonical map 
\[ 
\tau_R: \bL_{R/\Z}
\longrightarrow
\bL_{R/\Z}\otimes_\Z^{\LLL}\Q\simeq 
\bL_{R/\Z}\otimes_R^{\LLL} R_\Q
\stackrel{\simeq}{\longrightarrow}
\bL_{R_\Q/\Q} 
\]
between cotangent complexes in $\Ho(\Ch_{\ge 0}(\Z))$.
The first quasi-isomorphism is obvious, 
while the second one is an instance of flat base change for cotangent complexes.
\begin{lemma}\label{test}
The following statements are equivalent.
\begin{enumerate}
\item[i)] $\tau_R$ is a quasi-isomorphism.
\item[ii)] For every prime $p$, 
there is a quasi-isomorphism $\bL_{R/\Z}\otimes_\Z^{\LLL}\bF_p\simeq 0$.
\end{enumerate}
If the abelian group underlying $R$ is torsion free, 
then i) and ii) are equivalent to
\begin{enumerate}
\item[iii)] For every prime $p$, $\bL_{(R/pR)/\bF_p}\simeq 0$.
\end{enumerate}
\end{lemma}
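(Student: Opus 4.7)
The plan is to handle (i)$\Leftrightarrow$(ii) and (ii)$\Leftrightarrow$(iii) separately; each reduces to a standard black-box fact.

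For (i)$\Leftrightarrow$(ii), the first thing I would do is unpack the definition of $\tau_R$ as the two-step composite displayed just before the lemma, and observe that the second arrow in that composite is already declared to be a quasi-isomorphism by flat base change along $\Z\to\Q$. Hence $\tau_R$ is a quasi-isomorphism precisely when the first arrow $\bL_{R/\Z}\to \bL_{R/\Z}\otimes_\Z^{\LLL}\Q$ is one. Since $\bL_{R/\Z}$ lies in $\Ch_{\ge 0}(\Ab)$, Lemma \ref{rational} applied to $X=\bL_{R/\Z}$ then identifies this condition with (ii).

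For (ii)$\Leftrightarrow$(iii) under the torsion-free hypothesis, the key point is flat base change for the cotangent complex. The commutative square of commutative rings with corners $\Z$, $R$, $\bF_p$, $R/pR$ is a pushout, and since $R$ is $\Z$-flat (being torsion-free), it is Tor-independent in the sense that $R\otimes_\Z^{\LLL}\bF_p\simeq R/pR$. Illusie's derived base change formula for the cotangent complex then yields
\[
\bL_{(R/pR)/\bF_p}\;\simeq\; \bL_{R/\Z}\otimes_R^{\LLL}(R\otimes_\Z^{\LLL}\bF_p)\;\simeq\; \bL_{R/\Z}\otimes_\Z^{\LLL}\bF_p,
\]
so (iii) and (ii) match prime by prime.

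There is no real obstacle here: the only point requiring care is verifying the hypothesis of Illusie's base change (Tor-independence), which is immediate from torsion-freeness of $R$. Putting the two equivalences together yields the lemma.
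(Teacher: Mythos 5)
Your proof is correct and follows the same route as the paper: the equivalence of (i) and (ii) is obtained by reducing to the first arrow of $\tau_R$ (the other being a quasi-isomorphism by flat base change) and then applying Lemma \ref{rational} to $X=\bL_{R/\Z}$, and (ii)$\Leftrightarrow$(iii) is flat base change for cotangent complexes using that torsion-free implies $\Z$-flat. You merely spell out the steps a bit more explicitly than the paper does.
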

\begin{proof} 
The equivalence of i) and ii) follows by applying Lemma \ref{rational} to $X=\bL_{R/\Z}$. 
If $R$ is torsion free, 
then it is flat as a $\Z$-algebra.
Hence, 
by flat base change, 
there exists a quasi-isomorphism
\[ 
\bL_{R/\Z}\otimes_\Z^{\LLL}\bF_p\simeq \bL_{(R/pR)/\bF_p}.
\]
\end{proof}

The following is our analogue for Robinson's $\Gamma$-homology complex of the Baker-Richter result \cite[Theorem 5.1]{bakerrichter}.
\begin{theorem} 
\label{k-theory-coops-gamma-cotangentcomplex}
\begin{itemize}
\item [i)] Let $R$ be a torsion free ring such that $\bL_{(R/pR) / \bF_p}\simeq 0$ for every prime $p$, 
e.g., assume that $\bF_p\longrightarrow R/pR$ is ind-\'etale for all $p$.
Then there is a quasi-isomorphism
\[ 
{\caK}(R|\Z)\simeq {\caK}(R_\Q|\Q)
\]
in the derived category of $R$-modules.
\item[ii)] There is a quasi-isomorphism
\[ 
{\caK}(\KU_0\KU | \KU_0)\simeq (\KU_0\KU)_\Q[0]
\]
in the derived category of $\KU_0\KU$-modules.
\end{itemize}\end{theorem}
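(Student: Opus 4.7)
My plan is to deduce (ii) from (i), so I focus first on part (i). The comparison map $\caK(R|\Z) \to \caK(R_\Q|\Q)$ factors as rationalization followed by a flat base change quasi-isomorphism for $\caK$; by Lemma \ref{rational} applied to $X = \caK(R|\Z)$, it therefore suffices to establish the mod-$p$ vanishing $\caK(R|\Z) \otimes^{\LLL}_\Z \bF_p \simeq 0$ for every prime $p$. Flatness of $R$ over $\Z$, combined with a flat base change property for Robinson's $\Gamma$-homology complex (the analogue of Quillen's flat base change for the cotangent complex), produces an identification $\caK(R|\Z) \otimes^{\LLL}_\Z \bF_p \simeq \caK(R/pR \,|\, \bF_p)$. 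The question then becomes: why does $\caK(B|A) \simeq 0$ follow from $\bL_{B/A} \simeq 0$, here applied with $A = \bF_p$ and $B = R/pR$?

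This implication is the main technical obstacle. I would extract it from the comparison between Robinson's $\Gamma$-homology and the topological Andr\'e-Quillen homology of the associated Eilenberg-Mac Lane spectrum: the hypothesis $\bL_{B/A} \simeq 0$ expresses weak \'etaleness of $A \to B$, and weakly \'etale morphisms have contractible TAQ, hence contractible $\caK$. Alternatively, under the stronger parenthetical hypothesis that $\bF_p \to R/pR$ is ind-\'etale, one may argue by a filtered colimit reduction to the case of a finite \'etale extension, where vanishing of $\Gamma$-homology is classical. Either route would need to be spelled out carefully, as the passage from a homotopical statement about spectra to a strict statement about Robinson's combinatorial complex is not purely formal.

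For part (ii), the ring $\KU_0\KU$ is torsion free by the classical description of the cooperations of complex $K$-theory, and each mod-$p$ reduction $\KU_0\KU/p$ is an ind-\'etale $\bF_p$-algebra; this is a key input already exploited in \cite{bakerrichter}. Part (i) then gives $\caK(\KU_0\KU \,|\, \KU_0) \simeq \caK((\KU_0\KU)_\Q \,|\, \Q)$. Finally, $(\KU_0\KU)_\Q$ is a Laurent polynomial algebra in one variable over $\Q$ and is therefore smooth; over a $\Q$-algebra base $\Gamma$-homology coincides with Andr\'e-Quillen homology, so the right-hand complex reduces to the K\"ahler differentials placed in a single homological degree, a free rank one module over $(\KU_0\KU)_\Q$. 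A choice of generator $du$ identifies this complex with $(\KU_0\KU)_\Q[0]$, yielding the claim.
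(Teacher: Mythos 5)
Your treatment of part (ii) matches the paper's. For part (i), your overall plan --- reduce to mod-$p$ vanishing via Lemma \ref{rational} applied to $X={\caK}(R|\Z)$, and use flat base change (Lemma \ref{naivebasechange}) to identify ${\caK}(R|\Z)\otimes^{\LLL}_\Z\bF_p$ with ${\caK}(R/pR\,|\,\bF_p)$ --- is sound as far as it goes, and is genuinely different in shape from what the paper does. But you correctly flag the real gap yourself: you still need to show that $\bL_{(R/pR)/\bF_p}\simeq 0$ forces ${\caK}(R/pR\,|\,\bF_p)\simeq 0$, and neither of your two proposed routes is a proof as written. The TAQ route would require invoking Basterra--Richter to identify $\Gamma$-homology with topological Andr\'e--Quillen homology of Eilenberg--MacLane spectra, and then the statement ``weakly \'etale implies contractible TAQ'' is not something one can simply cite --- it is precisely the content of the spectral sequence relating TAQ to the algebraic cotangent complex, so you would be smuggling in the key input. (Also note that $\bL_{B/A}\simeq 0$ is weaker than weak \'etaleness in general.) The ind-\'etale/filtered colimit route only covers the parenthetical special case in the statement, not the general hypothesis $\bL_{(R/pR)/\bF_p}\simeq 0$.

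The paper closes exactly this gap using the Atiyah--Hirzebruch type spectral sequence of Richter,
\[
E^2_{p,q}=H^p\bigl(\bL_{R/\Z}\otimes^{\LLL}_\Z\Gamma^q(\Z[x]|\Z)\bigr)\;\Rightarrow\; H^{p+q}\bigl({\caK}(R|\Z)\bigr),
\]
together with Lemma \ref{test}: the hypothesis says $\bL_{R/\Z}$ is quasi-isomorphic to its rationalization, so the whole $E^2$-page consists of $\Q$-vector spaces, hence so does the abutment. Rationality of ${\caK}(R|\Z)$ then follows directly, and the base change of Lemma \ref{naivebasechange} finishes. Notice this avoids ever reducing $\caK$ mod $p$; the cotangent complex hypothesis is fed straight into the $E^2$-page. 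If you want to salvage your plan, the cleanest fix is to apply the same spectral sequence over $\bF_p$ to ${\caK}(R/pR\,|\,\bF_p)$: its $E^2$-page is built from $\bL_{(R/pR)/\bF_p}$, which vanishes, so the complex is acyclic. Either way the spectral sequence is the missing ingredient; without it your step (3) remains unjustified.
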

\begin{proof}
\begin{itemize}\item[i)] 
The Atiyah-Hirzebruch spectral sequence noted in \cite[Remark 2.3]{richter} takes the form
\[ 
E^2_{p,q}=H^p(\bL_{R/\Z} \otimes_\Z^{\LLL}  \Gamma^q(\Z[x] |\Z))\Longrightarrow H^{p+q}({\caK}(R|\Z)).
\]
Our assumptions on $R$ and Lemma \ref{test} imply that the $E^2$-page is comprised of $\Q$-vector spaces. 
Hence so is the abutment, 
and there exists a quasi-isomorphism between complexes of $R$-modules
\[  
{\caK}(R|\Z)\stackrel{\simeq}{\longrightarrow}  {\caK}(R|\Z)\otimes_\Z\Q. 
\]
Moreover, 
by Lemma \ref{naivebasechange}, 
there is a quasi-isomorphism
\[{\caK}(R|\Z)\otimes_\Z\Q \simeq {\caK}(R_\Q|\Q).\]
\item[ii)] 
By \cite[Theorem 3.1, Corollary 3.4, (a)]{bakerrichter} and the Hopf algebra isomorphism
$A^{st}\simeq \KU_0\KU$ \cite[Proposition 6.1]{bakerrichter}, 
the ring $\KU_0\KU$ satisfies the assumptions for $R$ of part i)\footnote{Alternatively, this follows easily from Landweber exactness of $\KU$.}.
Now since $\KU_0\cong\Z$, 
\[  
{\caK}(\KU_0\KU | \KU_0)\simeq {\caK}((\KU_0\KU)_\Q | \Q).
\]
We have that $(\KU_0\KU)_\Q\simeq\Q[w^{\pm 1}]$ \cite[Theorem 3.2, (c)]{bakerrichter} is a smooth $\Q$-algebra.
Hence, 
since $\Gamma$-cohomology agrees with Andr\'e-Quillen cohomology over $\Q$,
there are quasi-isomorphisms 
\[ 
{\caK}(\KU_0\KU | \KU_0)
\simeq 
\Omega^1_{\Q[w^{\pm 1}]| \Q}[0]
\simeq
(\KU_0\KU)_\Q[0].
\]
\end{itemize}
\end{proof}

\subsection{The $\Gamma$-homology of $\KGL_{\ast\ast} \KGL$ over $\KGL_{\ast\ast}$}
The strategy in what follows is to combine the computations for $\KU$ in \S\ref{subsection:TheGamma-homologyofKU_0KUoverKU_0} 
with motivic Landweber exactness \cite{motiviclandweber}.
To that end we require the following general base change result,
which was also used in the proof of Theorem \ref{k-theory-coops-gamma-cotangentcomplex}.
\begin{lemma}
\label{naivebasechange}
For a pushout of ordinary, graded or bigraded commutative algebras
\begin{equation*}
\xymatrix{
A\ar[d]\ar[r] & B\ar[d] \\
C\ar[r] & D }
\end{equation*}
there are isomorphisms between complexes of $D$-modules 
$$
{\caK}(D\vert C)
\cong
{\caK}(B\vert A)\otimes_{B}D
\cong
{\caK}(B\vert A)\otimes_{A}C.
$$

If $B$ is flat over $A$, 
then $\widetilde{\caK}(B\vert A)$ is a first quadrant homological double complex of flat $B$-modules;
thus, in the derived category of $D$-modules there are quasi-isomorphisms
$$
{\caK}(D\vert C)
\simeq
{\caK}(B\vert A)\otimes^{\LLL}_{B}D
\simeq
{\caK}(B\vert A)\otimes^{\LLL}_{A}C.
$$
\end{lemma}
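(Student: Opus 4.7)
The plan is to reduce the base-change statement for $\widetilde{\caK}(B|A)$ to the corresponding base-change statement for iterated tensor powers $B^{\otimes_A n}$, which is a direct consequence of the universal property of the pushout $D=B\otimes_A C$.

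First I would recall from \cite[Definition 4.1]{Robinson} that the double complex $\widetilde{\caK}(B|A)$ has, in each bidegree $(p,q)$, a $B$-module given as a finite direct sum of tensor powers $B^{\otimes_A n(p,q)}$ (indexed by certain tree/partition combinatorics), while all horizontal and vertical differentials are $B$-linear combinations of maps manufactured from the unit $\eta:A\to B$, the multiplication $\mu:B\otimes_A B\to B$, and permutations of tensor factors. The whole construction is thus a functor on the category of commutative $A$-algebras, valued in double complexes of $B$-modules, and everything takes place inside the ambient category of $A$-modules.

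Next I would apply the functor $(-)\otimes_A C$ bidegreewise. The canonical identification
\[
B^{\otimes_A n}\otimes_A C \;\cong\; (B\otimes_A C)^{\otimes_C n} \;=\; D^{\otimes_C n},
\]
coming from the universal property of pushouts of commutative algebras, together with the fact that $\eta$, $\mu$, and permutations base change to their analogues for $C\to D$, yields a canonical isomorphism of double complexes of $D$-modules $\widetilde{\caK}(B|A)\otimes_A C \cong \widetilde{\caK}(D|C)$. Totalising gives ${\caK}(D|C)\cong {\caK}(B|A)\otimes_A C$, and the identification $M\otimes_A C \cong M\otimes_B D$ for any $B$-module $M$ furnishes the second claimed isomorphism at once. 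The graded and bigraded cases are formally identical, since all tensor products are replaced by their graded or bigraded analogues, which also satisfy the pushout property.

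For the derived statement the only extra input required is that each bidegree of $\widetilde{\caK}(B|A)$ is a flat $B$-module whenever $B$ is flat over $A$: this holds since $B^{\otimes_A n}\cong B\otimes_A B^{\otimes_A(n-1)}$ and $B^{\otimes_A(n-1)}$ is flat over $A$, so that $B^{\otimes_A n}$ is flat over $B$ via the first factor. Flatness is preserved by direct sums and passage to the total complex, so ${\caK}(B|A)$ is termwise $B$-flat (and, via the $A$-flatness of $B$, also $A$-flat), whence $\otimes_B$ and $\otimes_A$ already compute the derived functors $\otimes_B^{\LLL}$ and $\otimes_A^{\LLL}$ on it. The derived quasi-isomorphisms then follow from the strict isomorphisms of the previous step. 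The only real point of friction is verifying that \emph{all} structural maps of Robinson's construction, not merely the underlying modules, commute with base change along $A\to C$; but since every such map is assembled from the symmetric monoidal operations on commutative $A$-algebras, which tautologically commute with the pushout, this is a matter of bookkeeping rather than a conceptual obstacle.
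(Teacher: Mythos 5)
Your proposal is correct and takes essentially the same route as the paper: both proofs reduce to base change for tensor powers $B^{\otimes_A n}\otimes_A C\cong D^{\otimes_C n}$ and then check that the structural maps of Robinson's construction are compatible with this base change, with the flat case following termwise. The paper packages this slightly differently, passing explicitly through the $\Gamma$-module $(B\vert A)^{\otimes}\otimes_A B$ and then applying Robinson's $\Xi$-construction and observing that $\Xi$ commutes with $-\otimes_B D$, whereas you unwind the double complex directly; but this is the same argument in different clothing, not a genuinely different method.
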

\begin{proof} 
Following the notation in \cite[\S4]{Robinson},
let $(B\vert A)^{\otimes}$ denote the tensor algebra of $B$ over $A$.
Then $(B\vert A)^{\otimes}\otimes_{A}B$ has a natural $\Gamma$-module structure over $B$,
cf.~\cite[\S4]{Robinson}.
Here $\Gamma$ denotes the category of finite based sets and basepoint preserving maps.
It follows that $((B\vert A)^{\otimes}\otimes_{A}B)\otimes_{B}D$ is a $\Gamma$-module over $D$.
Moreover,
by base change for tensor algebras, 
there exists an isomorphism of $\Gamma$-modules in $D$-modules
$$
((B\vert A)^{\otimes}\otimes_{A}B)\otimes_{B}D
\cong
(D\vert C)^{\otimes}\otimes_{C}D.
$$
Here we use that the $\Gamma$-module structure on $(B\vert A)^{\otimes}\otimes_{A} M$,
for $M$ a $B$-module, is given as follows: 
For a map $\varphi \colon [m] \to [n]$ between finite pointed sets, 
$$
(B \otimes_A B \otimes_A \cdots \otimes_A B)\otimes_A M 
\longrightarrow 
(B \otimes_A B \otimes_A \cdots \otimes_A B) \otimes_A M
$$
sends $b_1 \otimes \cdots \otimes b_m \otimes m$ to 
$$
(\prod_{i \in \varphi^{-1}(1)} b_i) \otimes \cdots \otimes (\prod_{i \in \varphi^{-1}(n)} b_i) \otimes ((\prod_{i \in \varphi^{-1}(0)} b_i) \cdot m).
$$
By convention, 
if $\varphi^{-1}(j)=\emptyset$ then $\prod_{i \in \varphi^{-1}(j)} b_i=1$.
Robinson's $\Xi$-construction yields an isomorphism between double complexes of $D$-modules 
$$
\widetilde{\caK}(D\vert C)=
\Xi((D\vert C)^{\otimes}\otimes_{C}D)
\cong
\Xi(((B\vert A)^{\otimes}\otimes_{A}B)\otimes_{B}D).
$$
Inspection of the $\Xi$-construction reveals there is an isomorphism 
$$
\Xi(((B\vert A)^{\otimes}\otimes_{A}B)\otimes_{B}D)
\cong
\Xi((B\vert A)^{\otimes}\otimes_{A}B)\otimes_{B}D.
$$
By definition, 
this double complex of $D$-modules is $\widetilde{\caK}(B\vert A)\otimes_{B}D\cong\widetilde{\caK}(B\vert A)\otimes_{A}C$. 
This proves the first assertion by comparing the corresponding total complexes.
The remaining claims follow easily.
\end{proof}

Next we recall the structure of  the motivic cooperations of the algebraic $K$-theory spectrum $\KGL$. 
The algebras we shall consider are bigraded as follows: 
$\KU_0\cong\Z$ in bidegree $(0,0)$ and $\KU_*\cong\Z[\beta^{\pm 1}]$ with the Bott element $\beta$ in bidegree $(2,1)$. 
With these conventions, 
there is a canonical bigraded map 
\[
\KU_*
\longrightarrow 
\KGL_{\ast\ast}.
\] 
\begin{lemma}\label{coops}
There are pushouts of bigraded algebras 
\begin{equation*}
\xymatrix{
\KU_{\ast} \ar[d]\ar[r]^-{\eta_L}  & \KU_{\ast}\KU \ar[d] \\
\KGL_{\ast\ast} \ar[r]^-{\eta_L}  & \KGL_{\ast\ast}\KGL }
\;\;\;\;\;\;
\xymatrix{
\KU_{0} \ar[d]\ar[r]^-{(\eta_L)_0} & \KU_{0}\KU\ar[d] \\
\KU_{\ast}\ar[r]^-{\eta_L} & \KU_{\ast}\KU }
\end{equation*}
and a quasi-isomorphism in the derived category of
$\KGL_{\ast\ast}\KGL$-modules
$$
{\caK}(\KGL_{\ast\ast}\KGL\vert\KGL_{\ast\ast})
\simeq
{\caK}(\KU_{0}\KU\vert\KU_{0})\otimes^{\LLL}_{\KU_{0}\KU}\KGL_{\ast\ast}\KGL.
$$
\end{lemma}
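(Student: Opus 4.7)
The plan is to establish the two pushout squares first and then obtain the quasi isomorphism as a two-step derived base change via Lemma \ref{naivebasechange}. The content of the lemma is really to combine an input from motivic homotopy theory (Landweber exactness of $\KGL$) with a classical input (Bott periodicity of $\KU$).

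First I would construct the left-hand pushout square. The key input is the motivic Landweber exactness result of \cite{motiviclandweber}, which identifies the bigraded ring of motivic cooperations as $\KGL_{**}\KGL\cong \KU_*\KU\otimes_{\KU_*}\KGL_{**}$. One has to check that this identification is compatible with the two canonical maps appearing in the square, namely the map $\KU_*\to \KGL_{**}$ coming from the orientation / Bott class and the left unit $\eta_L$ of cooperations; both are naturally induced by the universal Landweber construction, so the square commutes and is cocartesian by construction. For the right-hand square, one uses that $\KU_*=\KU_0[\beta^{\pm 1}]$ is obtained from $\KU_0=\Z$ by inverting the Bott element in the standard bidegree and that $\KU_0\KU$ is concentrated in degree zero, so the canonical comparison map $\KU_0\KU\otimes_{\KU_0}\KU_*\to \KU_*\KU$ is an isomorphism of bigraded rings; this is just Bott periodicity applied to cooperations.

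Next I would verify the flatness assumption in Lemma \ref{naivebasechange} for each of the two squares, so that the naive pushout identity for $\widetilde{\caK}$ upgrades to a derived base change. For the right-hand square the top horizontal map $\KU_0\to \KU_0\KU$ is flat, since Adams' description realizes $\KU_0\KU$ as a torsion free abelian group (explicitly a ring of numerical Laurent polynomials, and in particular a subring of $\Q[w^{\pm 1}]$), and flatness over $\Z$ reduces to torsion freeness. For the left-hand square the top map $\KU_*\to \KU_*\KU$ is flat by Adams' classical computation (where $\KU_*\KU$ is in fact free over $\KU_*$ on a basis of numerical polynomials in the Hopf element $w$).

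Granting these two ingredients, Lemma \ref{naivebasechange} applied to the left-hand square yields
\[
{\caK}(\KGL_{**}\KGL\vert\KGL_{**})
\simeq
{\caK}(\KU_*\KU\vert\KU_*)\otimes^{\LLL}_{\KU_*\KU}\KGL_{**}\KGL
\]
in the derived category of $\KGL_{**}\KGL$-modules. Applying Lemma \ref{naivebasechange} again to the right-hand square gives
\[
{\caK}(\KU_*\KU\vert\KU_*)
\simeq
{\caK}(\KU_0\KU\vert\KU_0)\otimes^{\LLL}_{\KU_0\KU}\KU_*\KU
\]
as complexes of $\KU_*\KU$-modules. Substituting the second quasi isomorphism into the first and using associativity of the derived tensor product (together with $\KU_*\KU\otimes^{\LLL}_{\KU_*\KU}\KGL_{**}\KGL\simeq\KGL_{**}\KGL$) produces the desired quasi isomorphism. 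The main obstacle, and really the only nontrivial step, is the motivic Landweber base change that gives the first pushout; the remaining ingredients are formal properties of $\KU_*\KU$ and direct applications of Lemma \ref{naivebasechange}.
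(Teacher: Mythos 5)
Your proposal follows the same route as the paper: the first pushout is the motivic Landweber base change for $\KGL$-cooperations (the paper cites \cite[Proposition 9.1, (c)]{motiviclandweber}), the second is the classical identification of $\KU_*\KU$ as $\KU_0\KU\otimes_{\KU_0}\KU_*$ (the paper cites \cite{bakerrichter}), and the quasi isomorphism then results from two applications of Lemma~\ref{naivebasechange}. Your added remarks verifying the flatness hypothesis for both squares are exactly the point that makes the derived form of the base change lemma applicable, so this is a faithful and slightly more explicit rendering of the paper's argument.
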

\begin{proof}
Here, $\eta_L$ is a generic notation for the left unit of some flat Hopf-algebroid. 
The first pushout is shown in \cite[Proposition 9.1 (i)]{motiviclandweber} and the second one in \cite{bakerrichter}. 
Applying Lemma \ref{naivebasechange} twice gives the claimed quasi-isomorphism.
\end{proof}

Next we compute the $\Gamma$-cohomology of the motivic cooperations of $\KGL$.
\begin{theorem}
\begin{itemize}\label{gammacomputation}
\item[i)] There is an isomorphism
\[ 
\HGamma^{\ast,\ast,\ast}(\KGL_{\ast\ast}\KGL\vert\KGL_{\ast\ast};\KGL_{\ast\ast})\cong \HH^*\RRR\Hom_\Z(\Q[0],\KGL_{\ast\ast}).
\]
\item[ii)] For all $s\ge 2$,
\[ 
\HGamma^{s,\ast,\ast}(\KGL_{\ast\ast}\KGL\vert\KGL_{\ast\ast};\KGL_{\ast\ast})=0.
\]
\end{itemize}
\end{theorem}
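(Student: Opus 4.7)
The plan is to reduce the trigraded motivic $\Gamma$-cohomology of $\KGL_{\ast\ast}\KGL$ over $\KGL_{\ast\ast}$ to an $\Ext$-calculation over $\Z$, by combining the topological input from Theorem \ref{k-theory-coops-gamma-cotangentcomplex}(ii) with the Landweber-type base change in Lemma \ref{coops}. Most of the genuine work has already been done in those two results, so the present theorem should drop out formally.

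First I would identify the relevant complex. Lemma \ref{coops} supplies a quasi isomorphism
\[
\caK(\KGL_{\ast\ast}\KGL \vert \KGL_{\ast\ast})
\simeq
\caK(\KU_{0}\KU \vert \KU_{0}) \otimes^{\LLL}_{\KU_{0}\KU} \KGL_{\ast\ast}\KGL
\]
in the derived category of $\KGL_{\ast\ast}\KGL$-modules. Substituting $\caK(\KU_{0}\KU \vert \KU_{0}) \simeq (\KU_{0}\KU)_{\Q}[0]$ from Theorem \ref{k-theory-coops-gamma-cotangentcomplex}(ii), and using that $(\KU_{0}\KU)_{\Q} \simeq \KU_{0}\KU \otimes^{\LLL}_{\Z} \Q$ by flatness of $\Q$ over $\Z$, the right-hand side collapses to $\KGL_{\ast\ast}\KGL \otimes^{\LLL}_{\Z} \Q\,[0]$. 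Derived tensor-Hom adjunction along the unit $\Z \to \KGL_{\ast\ast}\KGL$ then yields
\[
\RRR\Hom_{\KGL_{\ast\ast}\KGL}\bigl(\KGL_{\ast\ast}\KGL \otimes^{\LLL}_{\Z} \Q[0],\, \KGL_{\ast\ast}\bigr)
\simeq
\RRR\Hom_{\Z}(\Q[0], \KGL_{\ast\ast}),
\]
and passing to cohomology establishes (i). The bigrading on $\KGL_{\ast\ast}\KGL$ and $\KGL_{\ast\ast}$ is carried through each step by the construction of $\caK$ on bigraded algebras and by the base change formula, so all three gradings survive the reduction automatically.

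Part (ii) is then immediate from the fact that $\Z$ has global dimension one: $\Ext^{s}_{\Z}(\Q, M)=0$ for every $\Z$-module $M$ and every $s\ge 2$, applied with $M = \KGL_{\ast\ast}$. I do not anticipate a real obstacle; the only point requiring attention is the bookkeeping of the bigradings through the base change and the adjunction, which is purely formal once one has set up $\caK$ and $\RRR\Hom$ for bigraded algebras and modules.
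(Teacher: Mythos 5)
Your proposal is correct and follows essentially the same route as the paper: both arguments feed Lemma \ref{coops} and Theorem \ref{k-theory-coops-gamma-cotangentcomplex}(ii) into the derived tensor-Hom adjunction to reduce to $\RRR\Hom_\Z(\Q[0],\KGL_{\ast\ast})$, and both deduce (ii) from the global dimension of $\Z$. The only cosmetic difference is the order of operations --- you substitute the identification of $\caK(\KU_0\KU\vert\KU_0)$ before adjoining all the way down to $\Z$, whereas the paper performs the adjunction in two stages with the substitution in between.
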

\begin{proof}
\begin{itemize}
\item[i)] By the definition of $\Gamma$-cohomology and the results in this Subsection there are isomorphisms
\begin{equation*}
\begin{array}{rl} 
& \HGamma^{\ast,\ast,\ast}(\KGL_{\ast\ast}\KGL\vert\KGL_{\ast\ast};\KGL_{\ast\ast})\\
= & \HH^*\RRR\Hom_{\KGL_{\ast\ast}\KGL}({\caK}(\KGL_{\ast\ast}\KGL\vert\KGL_{\ast\ast}),\KGL_{\ast\ast}) \\
\cong & \HH^*\RRR\Hom_{\KGL_{\ast\ast}\KGL}({\caK}(\KU_{0}\KU\vert\Z)\otimes^{\LLL}_{\KU_{0}\KU}\KGL_{\ast\ast}\KGL,\KGL_{\ast\ast})\\
\cong & \HH^*\RRR\Hom_{\KU_{0}\KU}({\caK}(\KU_{0}\KU\vert\Z),\KGL_{\ast\ast})\\
\cong & \HH^*\RRR\Hom_{\KU_{0}\KU}((\KU_{0}\KU)_{\Q}[0],\KGL_{\ast\ast})\\
\cong & \HH^*\RRR\Hom_{\Z}(\Q[0],\KGL_{\ast\ast}).\\
\end{array}
\end{equation*}
\item[ii)] This follows from i) since $\Z$ has global dimension $1$.
\end{itemize}
\end{proof}
\begin{remark}
It is an exercise to compute $\RRR\Hom_\Z(\Q,-)$ applied to finitely generated abelian groups. 
This makes our $\Gamma$-cohomology computation explicit in cohomological degrees $0$ and $1$ for base schemes with finitely generated algebraic $K$-groups,
e.g., finite fields and number rings.
The computation $\RRR\Hom_\Z(\Q,\Z)\simeq{\hat{\Z}/\Z}[1]$ shows our results imply \cite[Corollary 5.2]{bakerrichter}.
\end{remark}

\section{Connective algebraic $K$-theory $\kgl$}\label{connective}
\label{juytd}
We define the connective algebraic $K$-theory spectrum $\kgl$ as the effective part $\ff_{0}\KGL$ of $\KGL$.
Recall the connective cover functor $\ff_i$ defined in \cite{voevodskyopen} projects from the motivic stable homotopy category to its $i$th effective part.
Note that $\ff_{0}\KGL$ is a commutative monoid in the motivic stable homotopy category. 
This holds since $\ff_{0}$ is a lax symmetric monoidal functor; 
indeed, 
it is right adjoint to a monoidal functor.
For $i\in\Z$ there exists a natural map $\ff_{i+1}\KGL\rightarrow\ff_{i}\KGL$ in $\SH(S)$ with cofiber the $i$th slice of $\KGL$.
We have in particular $\KGL\cong\hocolim\,\ff_{i}\KGL$,  
cf., \cite[Lemma 4.2]{voevodskyopen}.
Bott periodicity for algebraic $K$-theory implies that $\ff_{i+1}\KGL\cong\Sigma^{2,1}\ff_{i}\KGL$.      
This allows to recast the colimit as $\hocolim\,\Sigma^{2i,i}\kgl$ with multiplication by the Bott element $\beta$ in $\kgl^{-2,-1}\cong\KGL^{-2,-1}$ as the transition map at each stage.
We summarize these observations in a lemma.
\begin{lemma}
\label{lemma:KGLbottkgl}
The algebraic $K$-theory spectrum $\KGL$ is isomorphic in the motivic stable homotopy category to the Bott inverted connective algebraic $K$-theory spectrum $\kgl[\beta^{-1}]$.
\end{lemma}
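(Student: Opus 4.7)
The plan is to bootstrap the slice filtration of $\KGL$ into a Bott-inverted description, following exactly the outline sketched in the paragraph just before the lemma. The proof has three steps.

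First, I would invoke \cite[Lemma 4.2]{voevodskyopen}: every motivic spectrum is recovered from its effective slice filtration, so in particular
\[
\KGL \;\simeq\; \hocolim_{i \to -\infty}\, \ff_i \KGL
\]
along the canonical comparison maps $\ff_{i+1}\KGL \to \ff_i\KGL$. Second, I would use Bott periodicity $\ff_{i+1}\KGL \cong \Sigma^{2,1}\ff_i\KGL$ and the base case $\ff_0\KGL = \kgl$ to identify $\ff_i\KGL \cong \Sigma^{2i,i}\kgl$ for every $i \in \Z$. Under these identifications the structure map $\ff_{i+1}\KGL \to \ff_i\KGL$ corresponds to the $\Sigma^{2i,i}$-suspension of a map $\Sigma^{2,1}\kgl \to \kgl$, and one checks this map is (up to a unit) multiplication by the Bott class $\beta \in \kgl^{-2,-1} \cong \KGL^{-2,-1}$, by comparing against the genuine Bott element after inverting and using uniqueness of such a lift.

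Third, reindexing by $j = -i$ rewrites the colimit as the $\beta$-telescope
\[
\hocolim\Bigl( \kgl \xrightarrow{\,\beta\,} \Sigma^{-2,-1}\kgl \xrightarrow{\,\beta\,} \Sigma^{-4,-2}\kgl \to \cdots \Bigr),
\]
which by definition is $\kgl[\beta^{-1}]$, producing the required isomorphism in $SH$.

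The main obstacle is the middle step, namely pinning down that the slice-tower transition maps really do agree (up to a unit) with multiplication by $\beta$; the slice-filtration reconstruction and the use of Bott periodicity at the level of effective covers are formal. All of the ingredients have already been established in the references cited above, so the proof reduces to stringing them together and observing that the resulting pro-system is the Bott telescope.
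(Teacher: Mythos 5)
Your argument is exactly the one the paper gives in the paragraph immediately preceding the lemma: reconstruct $\KGL$ as $\hocolim\,\ff_i\KGL$ via \cite[Lemma 4.2]{voevodskyopen}, use Bott periodicity $\ff_{i+1}\KGL\cong\Sigma^{2,1}\ff_i\KGL$ to rewrite the tower as $\hocolim\,\Sigma^{2i,i}\kgl$ with transition maps multiplication by $\beta$, and identify the result with $\kgl[\beta^{-1}]$. The lemma in the paper is stated as a summary of that discussion rather than given a separate proof, so your proposal matches it in both substance and route.
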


\begin{theorem}
The connective algebraic $K$-theory spectrum $\kgl$ has a unique $E_{\infty}$ structure refining its multiplication in the motivic stable homotopy category.
\end{theorem}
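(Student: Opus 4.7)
The plan is to apply the motivic obstruction Theorem \ref{motivicgamma} to $\EE = \kgl$, in strict parallel with the deduction of Theorem \ref{uniqueKGL} from Theorem \ref{gammacomputation}. This reduces the statement to verifying two inputs: the universal coefficient theorem for $\kgl$, and the vanishing $\HGamma^{n,2-n,\ast}(\kgl_{\ast\ast}\kgl | \kgl_{\ast\ast}; \kgl_{\ast\ast}) = 0$ for $n \geq 4$ together with $\HGamma^{n,1-n,\ast}(\kgl_{\ast\ast}\kgl | \kgl_{\ast\ast}; \kgl_{\ast\ast}) = 0$ for $n \geq 3$.

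The first step is to control the cooperations $\kgl_{\ast\ast}\kgl$. I would establish a connective analogue of the first pushout of Lemma \ref{coops}, namely a pushout square of bigraded algebras
\begin{equation*}
\xymatrix{
\mathsf{ku}_{\ast} \ar[d] \ar[r]^-{\eta_L} & \mathsf{ku}_{\ast}\mathsf{ku} \ar[d] \\
\kgl_{\ast\ast} \ar[r]^-{\eta_L} & \kgl_{\ast\ast}\kgl }
\end{equation*}
arising either from a direct connective version of motivic Landweber exactness (using $\ff_0 \MGL$ in place of $\MGL$) or, more efficiently, by bootstrapping from the periodic square of Lemma \ref{coops} via Lemma \ref{lemma:KGLbottkgl} and the flatness of Bott inversion. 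This both makes $\eta_L \colon \kgl_{\ast\ast} \to \kgl_{\ast\ast}\kgl$ flat and yields the UCT for $\kgl$ from that of $\mathsf{ku}$ in the same manner that the UCT for $\KGL$ is obtained from that of $\KU$ in \cite{motiviclandweber}.

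Combining the pushout with Lemma \ref{naivebasechange} produces a quasi-isomorphism
\begin{equation*}
\caK(\kgl_{\ast\ast}\kgl | \kgl_{\ast\ast}) \simeq \caK(\mathsf{ku}_{\ast}\mathsf{ku} | \mathsf{ku}_{\ast}) \otimes^{\LLL}_{\mathsf{ku}_{\ast}\mathsf{ku}} \kgl_{\ast\ast}\kgl.
\end{equation*}
The topological input $\caK(\mathsf{ku}_{\ast}\mathsf{ku} | \mathsf{ku}_{\ast})$ is controlled by the connective Baker-Richter paper \cite{bakerrichterconnective} via exactly the mechanism of Theorem \ref{k-theory-coops-gamma-cotangentcomplex}: $\mathsf{ku}_{\ast}\mathsf{ku}$ is torsion-free with vanishing mod-$p$ relative cotangent complex for every prime $p$, so it rationalizes to a smooth $\mathsf{ku}_{\ast,\Q}$-algebra and its $\Gamma$-homology complex collapses to its homological degree zero part. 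Substituting into the defining $\RRR\Hom$ of $\Gamma$-cohomology and running the chain of base-change isomorphisms as in the proof of Theorem \ref{gammacomputation}(i) yields
\begin{equation*}
\HGamma^{\ast,\ast,\ast}(\kgl_{\ast\ast}\kgl | \kgl_{\ast\ast}; \kgl_{\ast\ast}) \cong \HH^{\ast} \RRR\Hom_{\mathsf{ku}_{\ast}}((\mathsf{ku}_{\ast}\mathsf{ku})_{\Q}[0], \kgl_{\ast\ast}),
\end{equation*}
whose cohomology is concentrated in degrees $\leq 2$ since $\mathsf{ku}_{\ast} = \Z[\beta]$ has global dimension $2$. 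This vanishes in degrees $\geq 3$, supplying both hypotheses of Theorem \ref{motivicgamma}.

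The main obstacle is the first step: the published motivic Landweber exactness of \cite{motiviclandweber} is formulated for the periodic $\KGL$, so one must either rerun its proof for the effective cover or deduce the connective square from the periodic one by checking that the periodic pushout descends along Bott inversion. Once that geometric input is in place, every remaining step is formally parallel to the proof of Theorem \ref{uniqueKGL} and requires only the obstruction-theoretic formalism already developed in the paper.
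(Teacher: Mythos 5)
Your proposal takes a fundamentally different route from the paper, and it has a serious gap. You attempt to run the $\Gamma$-cohomology obstruction theory (Theorem \ref{motivicgamma}) directly for $\EE=\kgl$, mirroring the argument for $\KGL$. But the entire mechanism of Section \ref{computeKGL} rests on Landweber exactness: it is what gives the flatness of $\eta_L\colon\KGL_{\ast\ast}\to\KGL_{\ast\ast}\KGL$, the universal coefficient theorem, and the pushout square of Lemma \ref{coops}. None of this is available for $\kgl$. The connective spectrum $\mathsf{ku}$ is \emph{not} Landweber exact, and $\mathsf{ku}_{\ast}\mathsf{ku}$ is \emph{not} flat over $\mathsf{ku}_{\ast}=\Z[\beta]$ (it contains $\beta$-torsion coming ultimately from the torsion in $H\Z_{\ast}H\Z$), so the proposed pushout square $\mathsf{ku}_{\ast}\to\mathsf{ku}_{\ast}\mathsf{ku}$, $\mathsf{ku}_{\ast}\to\kgl_{\ast\ast}$ with cocorner $\kgl_{\ast\ast}\kgl$ has no reason to hold, and the UCT hypothesis of Theorem \ref{motivicgamma} is not met. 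Your assertion that ``$\mathsf{ku}_{\ast}\mathsf{ku}$ rationalizes to a smooth $\mathsf{ku}_{\ast,\Q}$-algebra and its $\Gamma$-homology complex collapses'' is unsupported, and the reference to \cite{bakerrichterconnective} does not supply it: that paper does not obtain its connective results by computing $\Gamma$-cohomology of $\mathsf{ku}_{\ast}\mathsf{ku}$ over $\mathsf{ku}_{\ast}$, precisely because of the failure of flatness.

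The paper's proof sidesteps the obstruction theory entirely for $\kgl$. Existence follows because the effective cover functor $\ff_0$ is lax symmetric monoidal and preserves $E_\infty$-structures (\cite{GRSO}). For uniqueness, one starts with two $E_\infty$-structures on $\kgl$, inverts the Bott element at the $E_\infty$ level (\cite{RSO}), obtains two $E_\infty$-structures on $\KGL$ via Lemma \ref{lemma:KGLbottkgl}, invokes the uniqueness Theorem \ref{uniqueKGL} to identify them, and then recovers the original structures on $\kgl$ by applying $\ff_0$ and using that $X\to\ff_0(X[\beta'^{-1}])$ is an equivalence for effective $X\simeq\kgl$. This ``invert, compare, take connective cover'' sandwich is what makes the connective case tractable without ever touching $\kgl_{\ast\ast}\kgl$, and you should adopt it rather than trying to force the Landweber-exact framework onto a non-Landweber-exact spectrum.
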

\begin{proof}
The connective cover $\ff_{0}$ preserves $E_{\infty}$ structures \cite[\S 6]{GRSO}.
Thus the existence of an $E_{\infty}$ structure on $\kgl$ is ensured.
We note that inverting the Bott element can be refined to the level of motivic $E_{\infty}$ ring spectra by the methods employed in \cite{RSO}.
Thus,
by Lemma \ref{lemma:KGLbottkgl}, 
starting out with any two $E_{\infty}$ structures on $\kgl$ produces two $E_{\infty}$ structures on $\KGL$, 
which coincide by the uniqueness result for $E_{\infty}$ structures on $\KGL$ .
Applying $\ff_{0}$ recovers the two given $E_{\infty}$ structures on $\kgl$: 
If $\kgl^{\prime}$ is $E_\infty$ with $\varphi \colon \kgl\simeq\kgl^{\prime}$ as ring spectra, 
then there is a canonical $E_\infty$ map $\kgl^{\prime} \to \kgl^{\prime}[{\beta'}^{-1}]$, 
where $\beta'$ is the image of the Bott element under $\varphi$. 
Since $\kgl^{\prime}$ is an effective motivic spectrum,
this map factors as an $E_\infty$ map $\kgl^{\prime} \to \ff_0(\kgl^{\prime}[\beta'^{-1}])$. 
By construction of $\kgl$ the latter map is an equivalence.
This shows the two given $E_\infty$ structures on $\kgl$ coincide.
\end{proof}

\section{The motivic Adams summands $\ML$ and $\ml$}\label{adams}
\label{section:ThemotivicAdamssummandsMLandml}
Let $\BP$ denote the Brown-Peterson spectrum for a fixed prime number $p$.
Then the coefficient ring $\KU_{(p)\ast}$ of the $p$-localized complex $K$-theory spectrum is a $\BP_{\ast}$-module via the ring map $\BP_{\ast}\rightarrow\MU_{(p)\ast}$ 
which classifies the $p$-typicalization of the formal group law over $\MU_{(p)\ast}$. 
The $\MU_{(p)\ast}$-algebra structure on $\KU_{(p)\ast}$ is induced from the natural orientation $\MU \to \KU$.
With this $\BP_{\ast}$-module structure, 
$\KU_{(p)\ast}$ splits into a direct sum of the suspensions $\Sigma^{2i}\LLLL_{\ast}$ for $0\leq i\leq p-2$, 
where $\LLLL$ is the Adams summand of $\KU_{(p)}$.
Thus motivic Landweber exactness \cite{motiviclandweber} over the motivic Brown-Peterson spectrum $\MBP$ produces a splitting of motivic spectra
$$
\KGL_{(p)}
=
\bigvee_{i=0}^{p-2}
\Sigma^{2i,i}\ML.
$$
We refer to $\ML$ as the motivic Adams summand of algebraic $K$-theory.

Since $\LLLL_{\ast}$ is an $\BP_{\ast}$-algebra and there are no nontrivial phantom maps from any smash power of $\ML$ to $\ML$, 
which follows from \cite[Remark 9.8, (ii)]{motiviclandweber} since $\ML$ is a retract of $\KGL_{(p)}$,
we deduce that the corresponding ring homology theory induces a commutative monoid structure on $\ML$ in the motivic stable homotopy category. 

We define the connective motivic Adams summand $\ml$ to be $\ff_{0}\ML$.
It is also a commutative monoid in the motivic homotopy category.
\begin{theorem}
\label{theorem:MLml}
The motivic Adams summand $\ML$ has a unique $E_{\infty}$ structure refining its multiplication in the motivic stable homotopy category.
The same result holds for the connective motivic Adams summand $\ml$. 
\end{theorem}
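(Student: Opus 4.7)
My plan is to establish Theorem \ref{theorem:MLml} by parallelling the arguments in Sections \ref{computeKGL} and \ref{connective}, with $\LLLL$ and $\ML$ playing the roles of $\KU$ and $\KGL$. For $\ML$, the task is to verify the hypotheses of the motivic obstruction Theorem \ref{motivicgamma} by computing $\HGamma^{\ast,\ast,\ast}(\ML_{\ast\ast}\ML\vert\ML_{\ast\ast};\ML_{\ast\ast})$ via base change from the topological computation for $\LLLL_0\LLLL$ over $\LLLL_0=\Z_{(p)}$. For $\ml=\ff_0\ML$, I would reduce to the case of $\ML$ by inverting the Adams element $v_1$ in $E_{\infty}$-ring spectra, exactly as was done for $\kgl$.

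For $\ML$ I would first note the universal coefficient theorem: since motivic Landweber exactness over $\MBP$ realizes $\ML$ as a retract of $\KGL_{(p)}$, it follows from the $\KGL$ case \cite[Theorem 9.3, (i)]{motiviclandweber} by passing to the summand. Next I would combine the topological pushout relating $\LLLL_0\LLLL\vert\LLLL_0$ to $\LLLL_{\ast}\LLLL\vert\LLLL_{\ast}$ with a pushout
\begin{equation*}
\xymatrix{
\LLLL_{\ast} \ar[d]\ar[r]^-{\eta_L} & \LLLL_{\ast}\LLLL\ar[d] \\
\ML_{\ast\ast}\ar[r]^-{\eta_L} & \ML_{\ast\ast}\ML
}
\end{equation*}
provided by motivic Landweber exactness for $\ML$ (an Adams-summand variant of \cite[Proposition 9.1]{motiviclandweber}, arrived at in exactly the same way as Lemma \ref{coops} for $\KGL$). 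Two applications of Lemma \ref{naivebasechange} give a quasi isomorphism
\[
{\caK}(\ML_{\ast\ast}\ML\vert\ML_{\ast\ast})
\simeq
{\caK}(\LLLL_0\LLLL\vert\LLLL_0)\otimes^{\LLL}_{\LLLL_0\LLLL}\ML_{\ast\ast}\ML.
\]
Feeding in Lemma \ref{lemma:L_0L}, which in analogy with Theorem \ref{k-theory-coops-gamma-cotangentcomplex} ii) should identify ${\caK}(\LLLL_0\LLLL\vert\LLLL_0)$ with $(\LLLL_0\LLLL)_{\Q}[0]$, and running the same chain of isomorphisms as in the proof of Theorem \ref{gammacomputation} i), one concludes
\[
\HGamma^{\ast,\ast,\ast}(\ML_{\ast\ast}\ML\vert\ML_{\ast\ast};\ML_{\ast\ast})
\cong
\HH^{\ast}\RRR\Hom_{\Z_{(p)}}(\Q[0],\ML_{\ast\ast}),
\]
which vanishes for $\ast \geq 2$ since $\Z_{(p)}$ has global dimension $1$. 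This is stronger than what Theorem \ref{motivicgamma} demands, so the $E_{\infty}$-structure on $\ML$ exists and is unique.

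For $\ml=\ff_0\ML$ I would copy the argument of Section \ref{connective} verbatim. Existence of the $E_{\infty}$-structure follows from \cite{GRSO}, since the connective cover functor preserves $E_{\infty}$-structures. For uniqueness, Bott periodicity of $\LLLL$ gives an isomorphism $\ML\simeq\ml[v_1^{-1}]\cong\hocolim\,\Sigma^{2(p-1)i,(p-1)i}\ml$ in $SH$ analogous to Lemma \ref{lemma:KGLbottkgl}. The inversion of $v_1$ lifts to $E_{\infty}$-ring spectra by the techniques of \cite{RSO}, so any two $E_{\infty}$-structures on $\ml$ induce two $E_{\infty}$-structures on $\ML$ which must coincide by the first part of the proof; applying $\ff_0$ then recovers the originals because $\ml$ is effective.

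The main obstacle is really the topological input encapsulated in Lemma \ref{lemma:L_0L}: its proof presumably proceeds via Theorem \ref{k-theory-coops-gamma-cotangentcomplex} i), which requires verifying that $\LLLL_0\LLLL$ is torsion free and that its reduction modulo $\ell$ is ind-\'etale over $\bF_\ell$ for every prime $\ell$. This should follow from the Hopf-algebra structure of $\LLLL_0\LLLL$ together with Landweber exactness of $\LLLL$, but it is the one substantive non-formal computation in the argument, and care must also be taken to track the various bigraded module structures through the two-step base change above.
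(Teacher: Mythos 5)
Your proposal tracks the paper's argument essentially step for step: the two pushouts from motivic Landweber exactness combined with two applications of Lemma \ref{naivebasechange} give exactly Lemma \ref{coopsII}, the topological input you flag as the crux is precisely Lemma \ref{lemma:L_0L} (proved via Theorem \ref{k-theory-coops-gamma-cotangentcomplex} i) using the Baker--Richter identification $\LLLL_0\LLLL\cong {}^{\zeta}A^{st}_{(p)}$, which is free over $\Z_{(p)}$ with formally \'etale mod-$p$ reduction), and the $\ml$ case is handled exactly as you describe by inverting $v_1$ as an $E_\infty$-map and applying $\ff_0$. The only thing worth adding is that since $\LLLL_0\LLLL$ is a $\Z_{(p)}$-algebra, the ind-\'etaleness check you worry about reduces to the single prime $p$, as $R/\ell R=0$ for $\ell\neq p$.
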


The construction of $\ML$ as a motivic Landweber exact spectrum makes the following result evident on account of 
the proof of Lemma \ref{coops}.
\begin{lemma}
\label{coopsII}
There exist pushout squares of bigraded algebras 
\begin{equation*}
\xymatrix{
\LLLL_{\ast} \ar[d]\ar[r]^-{\eta_L}  & \LLLL_{\ast}\LLLL \ar[d] \\
\ML_{\ast\ast} \ar[r]^-{\eta_L}  & \ML_{\ast\ast}\ML }
\;\;\;\;\;\;
\xymatrix{
\LLLL_{0} \ar[d]\ar[r]^-{(\eta_L)_0} & \LLLL_{0}\LLLL\ar[d] \\
\LLLL_{\ast}\ar[r]^-{\eta_L} & \LLLL_{\ast}\LLLL }
\end{equation*}
and a quasi-isomorphism in the derived category of $\ML_{\ast\ast}\ML$-modules
$$
{\caK}(\ML_{\ast\ast}\ML\vert\ML_{\ast\ast})
\simeq
{\caK}(\LLLL_{0}\LLLL\vert\LLLL_{0})\otimes^{\LLL}_{\LLLL_{0}\LLLL}\ML_{\ast\ast}\ML.
$$
\end{lemma}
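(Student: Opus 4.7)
The strategy is to follow the proof of Lemma \ref{coops} essentially verbatim, with the topological complex $K$-theory spectrum $\KU$ replaced by its Adams summand $\LLLL$ and with $\KGL$ replaced by $\ML$. The only structural input needed is that $\ML$ is a motivic Landweber exact spectrum with coefficient ring $\LLLL_\ast$, and this is built into the construction of $\ML$ recalled just before the lemma, as the summand of $\KGL_{(p)}$ corresponding to the Adams summand of $\KU_{(p)}$.

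For the first pushout square, I would invoke \cite[Proposition 9.1, (c)]{motiviclandweber}, exactly as in the proof of Lemma \ref{coops}: motivic Landweber exactness applied to $\ML$ identifies $\ML_{\ast\ast}\ML$ with the base change of the topological cooperations $\LLLL_\ast\LLLL$ along $\eta_L \colon \LLLL_\ast \to \ML_{\ast\ast}$. For the second pushout, which is a purely topological statement, I would cite \cite{bakerrichter}; the assertion that $\LLLL_\ast\LLLL$ is the base change of $\LLLL_0\LLLL$ along $\LLLL_0\to\LLLL_\ast$ is the Adams summand analogue of the $\KU$-statement used in Lemma \ref{coops}, and is proved in the same way.

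Finally, Landweber exactness implies that both left units $\LLLL_0 \to \LLLL_0\LLLL$ and $\LLLL_\ast \to \LLLL_\ast\LLLL$ are flat, so the flat version of Lemma \ref{naivebasechange} applies twice to yield
$$
\caK(\ML_{\ast\ast}\ML \vert \ML_{\ast\ast}) \simeq \caK(\LLLL_\ast\LLLL \vert \LLLL_\ast) \otimes^{\LLL}_{\LLLL_\ast\LLLL} \ML_{\ast\ast}\ML \simeq \caK(\LLLL_0\LLLL \vert \LLLL_0) \otimes^{\LLL}_{\LLLL_0\LLLL} \ML_{\ast\ast}\ML,
$$
where the first quasi-isomorphism uses the first pushout square and the second uses the second pushout square together with associativity of the derived tensor product. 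The only point where the argument could diverge from the $\KGL$ case is the verification that the hypotheses of \cite[Proposition 9.1, (c)]{motiviclandweber} truly cover $\ML$; but since $\LLLL_\ast$ is a flat Landweber-exact $\BP_\ast$-module (via the classifying map of the $p$-typical formal group law restricted to the Adams summand), this falls into the same framework that was already invoked for $\KGL$, and no additional input beyond what is recalled in the paragraph preceding the lemma is needed.
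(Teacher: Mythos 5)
Your proposal is correct and follows the paper's argument essentially verbatim: the paper gives no separate proof for this lemma, merely noting that the construction of $\ML$ as a motivic Landweber exact spectrum makes the result follow by the same reasoning as Lemma \ref{coops}, which is precisely the substitution you carry out (first pushout from \cite[Proposition 9.1, (c)]{motiviclandweber}, second from \cite{bakerrichter}, then two applications of the flat case of Lemma \ref{naivebasechange}).
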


Next we show the analog of Theorem \ref{k-theory-coops-gamma-cotangentcomplex}, ii) for the motivic Adams summand.
\begin{lemma} 
\label{lemma:L_0L}
In the derived category of $\LLLL_0\LLLL$-modules, there is a quasi-isomorphism
$$
{\caK}(\LLLL_0\LLLL|\LLLL_0)
\simeq 
(\LLLL_0\LLLL)_\Q[0].
$$
\end{lemma}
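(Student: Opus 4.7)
The plan is to mimic the proof of Theorem \ref{k-theory-coops-gamma-cotangentcomplex}(ii), with two adjustments: (a) we must work over $\LLLL_0 = \Z_{(p)}$ rather than $\Z$, and (b) we must verify the ind-\'etale hypothesis for $R := \LLLL_0\LLLL$ from the Baker--Richter computations of the Adams summand cooperations (cf.\ their \cite{bakerrichter}, together with Landweber exactness of $\LLLL$).

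First I would check that the hypotheses of Theorem \ref{k-theory-coops-gamma-cotangentcomplex}(i) hold for $R$. Since $\LLLL$ is a Landweber exact retract of $\KU_{(p)}$, its ring of degree-zero cooperations $R$ embeds in a $\Q$-algebra and is therefore torsion-free; moreover $R$ is $p$-local, so $R/\ell R = 0$ for primes $\ell \neq p$, and the condition $\bL_{(R/pR)/\bF_p}\simeq 0$ at the only remaining prime follows from the analogue of \cite[Theorem 3.1, Corollary 3.4]{bakerrichter} for $\LLLL_0\LLLL$ (it is the $p$-typical summand of the ind-\'etale $\bF_p$-algebra $(\KU_0\KU)/p$). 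Applying Theorem \ref{k-theory-coops-gamma-cotangentcomplex}(i) then yields a quasi isomorphism
$$
{\caK}(R\vert\Z)\simeq {\caK}(R_{\Q}\vert\Q)
$$
in the derived category of $R$-modules.

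Next I would change the base from $\Z$ to $\LLLL_0=\Z_{(p)}$. Because $R$ is already $p$-local, the square $\Z\to\Z_{(p)}$, $\Z\to R$ has pushout $R$, and the flatness of $R$ over $\Z$ lets Lemma \ref{naivebasechange} give
$$
{\caK}(R\vert\Z_{(p)})\simeq {\caK}(R\vert\Z)\otimes^{\LLL}_{R}R\simeq{\caK}(R\vert\Z).
$$
Finally, the rational structure of $R$ is that of a Laurent polynomial algebra $(\LLLL_0\LLLL)_{\Q}\cong\Q[w^{\pm 1}]$ (visible from the rational splitting $\LLLL_{\Q}\simeq\HQ[v^{\pm 1}]$ with $w=\eta_{R}(v)/\eta_{L}(v)$), hence smooth over $\Q$ of relative dimension $1$ with $\Omega^{1}$ free of rank $1$. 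Since $\Gamma$-cohomology agrees with Andr\'e--Quillen cohomology over $\Q$, we conclude
$$
{\caK}(R_{\Q}\vert\Q)\simeq \Omega^{1}_{R_{\Q}\vert\Q}[0]\simeq (\LLLL_0\LLLL)_{\Q}[0],
$$
which chained with the previous two quasi isomorphisms proves the lemma.

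I expect the principal obstacle to be the verification of the ind-\'etale (equivalently, vanishing cotangent complex) condition for $\bF_p\to R/pR$: unlike the $\KU$ case treated in \cite[\S 3]{bakerrichter}, this needs either a direct identification of $R/pR$ with a summand of $(\KU_{(p)\,0}\KU_{(p)})/p$ that remains ind-\'etale, or a Landweber-style argument using that $\LLLL$ is obtained from $\BP$ by inverting $v_{1}$ and killing the higher $v_{i}$. Once that point is secured, the rest of the argument is a straightforward transcription of the $\KU$-case.
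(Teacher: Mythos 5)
Your approach is essentially the paper's approach, and most of the steps are correct. In particular, you make explicit a base change from $\Z$ to $\Z_{(p)}$ that the paper leaves implicit: since $R = \LLLL_0\LLLL$ is $p$-local and torsion-free, $R = R\otimes_\Z\Z_{(p)}$, so Lemma~\ref{naivebasechange} gives ${\caK}(R\vert\Z_{(p)})\simeq{\caK}(R\vert\Z)$. That is a worthwhile clarification.

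However, there is a gap at the step you yourself flag as "the principal obstacle": the verification that $\bL_{(R/pR)/\bF_p}\simeq 0$. Your parenthetical justification (that $R/pR$ "is the $p$-typical summand of the ind-\'etale $\bF_p$-algebra $(\KU_0\KU)/p$") is not an argument — a module summand of an ind-\'etale algebra need not itself be an \'etale algebra, and $\LLLL_0\LLLL$ is not in any simple way a summand of $\KU_0\KU$ as a ring — and you then concede you do not know how to establish the point. But Baker and Richter in fact already prove exactly what is needed: they identify $\LLLL_0\LLLL$ with the Hopf algebra ${}^{\zeta}A^{st}_{(p)}$, show it is a free $\Z_{(p)}$-module on a countable basis, and prove that ${}^{\zeta}A^{st}_{(p)}/p\,{}^{\zeta}A^{st}_{(p)}$ is a formally \'etale $\bF_p$-algebra; this is \cite[Theorem 3.3(c), Corollary 4.2]{bakerrichter}, and the paper simply cites those results. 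So the "obstacle" is not an obstacle at all once one reads the relevant part of \cite{bakerrichter}; your phrase "unlike the $\KU$ case treated in \cite[\S 3]{bakerrichter}" suggests you overlooked that their \S\S 3--4 handle the Adams summand as well as $\KU$. One further small point: you write $(\LLLL_0\LLLL)_\Q\cong\Q[w^{\pm1}]$ with your own choice of generator, while the paper has $(\LLLL_0\LLLL)_\Q\cong\Q[v^{\pm1}]$ with $v=w^{p-1}$; this is only a difference in the choice of Laurent generator and does not affect the smoothness argument or the identification of $\Omega^1$ as a free module of rank one.
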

\begin{proof}
In the notation of \cite[Proposition 6.1]{bakerrichter} there is an isomorphism between Hopf algebras $\LLLL_0\LLLL\cong {}^{\zeta}A^{st}_{(p)}$.
Recall that ${}^{\zeta}A^{st}_{(p)}$ is a free $\Z_{(p)}$-module on a countable basis and ${}^{\zeta}A^{st}_{(p)}/p{}^{\zeta}A^{st}_{(p)}$ is a formally 
\'etale $\bF_{p}$-algebra \cite[Theorem 3.3(c), Corollary 4.2]{bakerrichter}. 
Applying Theorem \ref{k-theory-coops-gamma-cotangentcomplex}, i) to $R=\LLLL_0\LLLL$ and using that $(\LLLL_0\LLLL)_\Q\simeq\Q[v^{\pm 1}]$ 
by Landweber exactness, 
where $v=w^{p-1}$ and $(\KU_0 \KU)_\Q\cong\Q[w^{\pm 1}]$,
we find
\[ {\caK}(\LLLL_0\LLLL | \LLLL_0)
\simeq 
\Omega^1_{\Q[v^{\pm 1}]| \Q}[0]
\simeq 
(\LLLL_0\LLLL)_\Q[0].\]
\end{proof}

Lemmas \ref{coopsII} and \ref{lemma:L_0L} imply there is a quasi-isomorphism
\[ 
\HGamma^{\ast,\ast,\ast}(\ML_{\ast\ast}\ML\vert\ML_{\ast\ast};\ML_{\ast\ast})\simeq \HH^*\RRR\Hom_\Z(\Q[0],\ML_{\ast\ast}).
\]
Thus the part of Theorem \ref{theorem:MLml} dealing with $\ML$ follows, since for all $s\ge 2$ we have
\begin{equation}
\label{gammahomologyvanishingofML}
\HGamma^{s,\ast,\ast}(\ML_{\ast\ast}\ML\vert\ML_{\ast\ast};\ML_{\ast\ast})=0.
\end{equation}
The assertion about $\ml$ follows by the exact same type of argument as for $\kgl$.
The periodicity operator in this case is $v_1 \in \ml^{2(1-p),1-p}=\ML^{2(1-p),1-p}$.

\section{Multiplicative structures on the $K$-theory presheaf}
\label{multstructures}

Fix a Noetherian separated and regular scheme $S$ of finite Krull dimension and its Nisnevich site $Sm/S$ of smooth schemes of finite type.
We consider the $K$-theory presheaf of spectra \cite{jardine} taking values in $E_\infty$ rings
\[ 
K
\colon
(Sm/S)^{\op}
\longrightarrow
E_\infty \text{ rings}.
\]
For a smooth $S$-scheme scheme $X$, 
the $E_\infty$ structure on $K(X)$ arises from the bipermutative category of locally free ${\mathcal O}_X$-modules of finite rank \cite{may2}.
One may ask if this $E_\infty$ structure is the unique one refining the underlying homotopy commutative ring spectrum structure on $K(X)$. 
While we are unaware of any (non-empty) scheme $X$ for which a complete answer to this question is known,
we show the following.

\begin{theorem}
Fix $S$ as above and a presheaf
\[ 
\caKK
\colon
(Sm/S)^{\op} 
\longrightarrow 
E_\infty \text{ rings}.
\]
Suppose $\caKK$ and $K$ are equivalent as homotopy commutative $S^1$-spectra.
Then they are equivalent as $E_\infty$ $S^1$-spectra. 
In particular, 
if $X\in Sm/S$, 
there is an equivalence of $E_\infty$ ring spectra $\caKK(X)\simeq K(X)$.
\end{theorem}

\begin{proof}\footnote{The model structures appearing in this proof exist by standard lifting arguments.}
We use the given equivalence $\caKK\simeq K$ and the known Bott periodicity of $K$ to deduce Bott periodicity for $\caKK$, 
i.e., there is a map $\Sigma^\infty\P^1\to\caKK$ of $S^1$-spectra such that 
the adjoint of its $\caKK$-linear extension $b:\Sigma^\infty\P^1\wedge\caKK\to\caKK$ is an equivalence
$\caKK\stackrel{\simeq}{\to}\Omega_{\P^1}\caKK$. 
We will use this to deloop the $E_\infty$ structure on $\caKK$.

Let $\Mod_{S^0}$ denote the category of $S^1$-spectra and $\Mod_\caKK$ the category of $\caKK$-modules in $S^1$-spectra. 
We denote by $\mathsf{CAlg}(\Mod_\caKK^\Sigma)$ the commutative monoids in the category $\Mod_\caKK^\Sigma$ of symmetric sequences in 
$\caKK$-modules (endowed with the Day convolution product).
We note there is an adjunction 
\[ 
\xymatrix{
\mathsf{Sym}_\caKK:\Mod_\caKK \ar@<.5ex>[r]  & 
\mathsf{CAlg}(\Mod_\caKK^\Sigma). \ar@<.5ex>[l] } 
\]
Here $\mathsf{Sym_\caKK}$ sends $M$ to $(M^{\wedge_\caKK n})_{n\ge 0}$ and its right adjoint picks out the first entry of a symmetric sequence.
The map 
\[ 
\mathsf{Sym}_\caKK(b)
\colon
\mathsf{Sym}_\caKK(\Sigma^\infty\P^1\wedge\caKK)\to\mathsf{Sym}_\caKK(\caKK)
\]
lets us consider $\mathsf{Sym}_\caKK(\caKK)$ as a commutative monoid in $\mathsf{Sym}_\caKK(\Sigma^\infty\P^1\wedge\caKK)-\Mod^\Sigma_\caKK$
(the category of $\mathsf{Sym}_\caKK(\Sigma^\infty\P^1\wedge\caKK)$-modules in $\Mod^\Sigma_\caKK$).
The adjunction 
\[ 
\xymatrix{
\Mod_{S^0} \ar@<.5ex>[r] & 
\Mod_\caKK  \ar@<.5ex>[l]}
\]
induces an adjunction with a symmetric monoidal left adjoint functor
\[ 
\xymatrix{\mathsf{Sym}_{S^0}(\Sigma^\infty\P^1)-\Mod_{S^0}^\Sigma \ar@<.5ex>[r]  & 
\mathsf{Sym}_\caKK(\Sigma^\infty\P^1\wedge \caKK)-\Mod^\Sigma_\caKK:f. \ar@<.5ex>[l]}
\]
Observing the left hand side of this adjunction is the category of motivic symmetric spectra, 
we obtain a (strictly) commutative motivic ring spectrum $f(\mathsf{Sym}_\caKK(\caKK))$. 
This is the desired delooping of $\caKK$.
Note that $f(\mathsf{Sym}_\caKK(\caKK))$ and $\KGL$ are equivalent as motivic symmetric spectra
because they are both $\Omega-\mathbb{P}^1$-spectra with terms $\caKK\simeq K$, 
and the bonding maps of the former are by definition identified with those of the latter.
We claim this can be refined to an equivalence between homotopy commutative motivic ring spectra.

Since $\KGL$ and $f(\mathsf{Sym}_\caKK(\caKK))$ have the same motivic homotopy type, 
$f(\mathsf{Sym}_\caKK(\caKK))$ gives rise to a second multiplication on $\KGL$, 
which we want to check agrees with the standard one up to homotopy. 
That is, 
we show their difference $\delta\in\KGL^{00}(\KGL\wedge\KGL)$ vanishes (an analogous and easier argument takes care of the unit maps).
There are no nontrivial phantom maps $\KGL\wedge\KGL\to\KGL$ \cite[Remark 9.8 (ii)]{motiviclandweber}, 
so it suffices that $\delta$ maps to $0$ when pulled back along any map $T\to\KGL\wedge\KGL$, 
where $T$ is compact. 

We have
\[ 
\KGL 
= 
\colim_n\, \Sigma^{-2n,-n}\Sigma^\infty_{\mathbb{P}^1}(\mathbb{Z}\times\BGL), 
\]
which immediately implies 
\[ 
\KGL\wedge\KGL  
= 
\colim_{n,m}\, \Sigma^{-2n,-n}\Sigma^\infty_{\mathbb{P}^1}(\mathbb{Z}\times\BGL) \wedge \Sigma^{-2m,-m}\Sigma^\infty_{\mathbb{P}^1}(\mathbb{Z}\times\BGL). 
\]
Thus every map $T\to \KGL\wedge\KGL$ factors through one of the canonical maps
\[ 
\iota_{n,m}: 
\Sigma^{-2n,-n}\Sigma^\infty_{\mathbb{P}^1}(\mathbb{Z}\times\BGL) \wedge \Sigma^{-2m,-m}\Sigma^\infty_{\mathbb{P}^1}(\mathbb{Z}\times\BGL) 
\to 
\KGL\wedge\KGL,
\]
and it suffices to show that $\delta\circ\iota_{n,m}=0$ for all $n,m\ge 0$.
The next lemma, 
which follows easily from the constructions of the multiplications, 
reduces to the case $n=m=0$.

\begin{lemma} 
\label{lemma:multiplicationbott} 
Writing the $\mathbb{P}^1$-spectrum $\KGL=(K,K,\ldots)$ as a sequence of $S^1$-spectra, 
the identity on $K$ determines a map $\alpha:\KGL\to\Sigma^{2,1}\KGL$.
The latter map agrees with multiplication by the Bott element for both of the given ring structures on $\KGL$.
\end{lemma}

From the above one deduces the commutativity of the diagram:
\[ 
\xymatrix{ 
& & \KGL\ar[dd]^-{\alpha} \\ 
\Sigma_{\mathbb{P}^1}^\infty(\mathbb{Z}\times\BGL) \ar[rru]^-{\iota_0} \ar[rrd]^-{\iota_1} & & \\
& & \Sigma^{2,1}\KGL  
} 
\]
By iteration, 
the left hand square in the diagram 
\[ 
\xymatrix{ 
\Sigma^\infty_{\mathbb{P}^1}(\mathbb{Z}\times\BGL)\wedge \Sigma^\infty_{\mathbb{P}^1}(\mathbb{Z}\times\BGL) \ar[r]^-{\iota_{0,0}} \ar[d]^-{id} 
& \KGL\wedge\KGL \ar[d]^{\alpha^n\wedge\alpha^m}_{\simeq} \ar[r]^-{\delta} & \KGL \ar[d]^-{\alpha^{n+m}}_{\simeq}\\
\Sigma^\infty_{\mathbb{P}^1}(\mathbb{Z}\times\BGL)\wedge \Sigma^\infty_{\mathbb{P}^1}(\mathbb{Z}\times\BGL) \ar[r]^-{\iota_{n,m}} 
& \Sigma^{2n,n}\KGL \wedge \Sigma^{2m,m}\KGL  \ar[r]^-{\delta} & \Sigma^{2(n+m),n+m}\KGL 
} 
\]
commutes (with the suspension coordinate for $\iota_{n,m}$ omitted).
Lemma \ref{lemma:multiplicationbott} implies the right hand square commutes.
This concludes the reduction to the case $n=m=0$, 
i.e., 
it suffices to consider the canonical map out of 
\[ 
\Sigma_{\P^1}^\infty\left(\Sigma_{S^1}^\infty\left( (\Z\times\BGL)\wedge (\Z\times\BGL)\right)\right)\]
\noindent
by viewing motivic symmetric spectra as motivic symmetric spectra in $S^1$-spectra. 
By adjunction the assertion is clear now because by assumption there are equivalences of commutative monoids in the homotopy category of $S^1$-spectra
\[
K
=
\Omega^\infty_{\P^1}(\KGL)\simeq \Omega^\infty_{\P^1}(f(\mathsf{Sym}_\caKK(\caKK)))
=
\caKK. 
\]
\noindent
The uniqueness clause of Theorem \ref{uniqueKGL} implies $f(\mathsf{Sym}_\caKK(\caKK))$ and 
$\KGL$ are equivalent as commutative motivic ring spectra.
We obtain the desired equivalence of $E_\infty$ $S^1$-spectra by passing to loop spaces
(the first equivalence is due to Morel-Voevodsky and uses regularity of $S$)
\[ 
K
\simeq
\Omega^\infty_{\P^1}(\KGL)
\simeq
\Omega_{\P^1}^\infty(f(\mathsf{Sym}_\caKK(\caKK)))
\simeq\caKK.
\]
\end{proof}

\begin{remark} 
The idea of delooping an $E_\infty$ structure, 
due to the second author, 
can be applied in other contexts. 
For example, 
the uniqueness of the $E_\infty$ structure on $\KU$ implies there is a unique $E_\infty$ space structure on
$\Z\times\BU$ in pointed spaces, 
which refines the usual multiplicative structure up to homotopy.
The analogous observation applies to $\KGL$ and $\Z\times\BGL$.
A more elaborate version of the argument can be used to construct an $E_\infty$ structure on the motivic Hermitian $K$-theory spectrum.
We expect uniqueness of such an $E_\infty$ structure when the base scheme contains a square root of $-1$,
because then the cooperations are base changed from topological real $K$-theory.
The details will be the subject of a forthcoming PhD thesis.
\end{remark}

{\bf Acknowledgements.}
The main result of this paper was announced by the first named author at the 2009 M{\"u}nster workshop 
on Motivic Homotopy Theory. 
He thanks the organizers E.~M.~Friedlander, G.~Quick and P.~A.~{\O}stv{\ae}r for the invitation, 
and P.~A.~{\O}stv{\ae}r for hospitality while visiting the University of Oslo, where the major part of this work was finalized.
The authors thank A.~Robinson for valuable discussions keeping us informed about his recent work, 
an anonymous referee and J.~Rognes for comments on a shorter version of this paper and another referee for
a careful inspection of the final section.

\bibliographystyle{plain}
\bibliography{motobs}

\vspace{0.1in}

\begin{center}
Fakult{\"a}t f{\"u}r Mathematik, Universit{\"a}t Regensburg, Germany.\\
e-mail: niko.naumann@mathematik.uni-regensburg.de
\end{center}
\begin{center}
Institut f{\"u}r Mathematik, Universit{\"a}t Osnabr{\"u}ck, Germany.\\
e-mail: markus.spitzweck@uni-osnabrueck.de
\end{center}
\begin{center}
Department of Mathematics, University of Oslo, Norway.\\
e-mail: paularne@math.uio.no
\end{center}
\end{document}